\newcommand*{\mailto}[1]{\href{mailto:#1}{\nolinkurl{#1}}}
\newcommand{\bbC}{{\mathbb{C}}}
\newcommand{\bbN}{{\mathbb{N}}}
\newcommand{\bbR}{{\mathbb{R}}}
\newcommand{\bbS}{{\mathbb{S}}}
\newcommand{\bbZ}{{\mathbb{Z}}}
\newcommand{\cB}{{\mathcal B}}
\newcommand{\cH}{{\mathcal H}}
\newcommand{\beq}{\begin{equation}}
\newcommand{\enq}{\end{equation}}
\DeclareMathOperator{\ran}{ran}
\DeclareMathOperator{\dom}{dom}
\DeclareMathOperator{\ind}{ind}
\DeclareMathOperator*{\sgn}{sgn}
\DeclareMathOperator*{\defe}{def}
\renewcommand{\Im}{\text{\rm Im}}
\renewcommand{\ln}{\text{\rm ln}}
\newcommand{\no}{\notag}
\newcommand{\lb}{\label}
\newcommand{\f}{\frac}
\newcommand{\ol}{\overline}
\newcommand{\wti}{\widetilde}
\newcommand{\Oh}{O}
\newcommand{\hatt}{\widehat}
\newcommand{\dott}{\,\cdot\,}
\renewcommand{\dot}{\overset{\textbf{\Large.}}}
\renewcommand{\ddot}{\overset{\textbf{\Large..}}}
\renewcommand{\dotplus}{\overset{\textbf{\Large.}} +}
\newcommand{\bi}{\bibitem}
\let\geq\geqslant
\let\leq\leqslant
\def\theequation{\@arabic\c@equation}
\numberwithin{equation}{section}
\newtheorem{theorem}{Theorem}[section]
\newtheorem{proposition}[theorem]{Proposition}
\newtheorem{lemma}[theorem]{Lemma}
\newtheorem{corollary}[theorem]{Corollary}
\newtheorem{definition}[theorem]{Definition}
\newtheorem{hypothesis}[theorem]{Hypothesis}
\newtheorem{example}[theorem]{Example}
\theoremstyle{remark}
\newtheorem{remark}[theorem]{Remark}
\begin{document}

\title[On the Product Formula for Defect Numbers of Closed Operators]{Some Remarks on the Product Formula for Defect Numbers of Closed Operators}
\author[C.\ Fischbacher]{Christoph Fischbacher}
\address{Department of Mathematics, Baylor University, Sid Richardson Bldg., 1410 S.\,4th Street, Waco, TX 76706, USA}
\email{\mailto{C\_Fischbacher@baylor.edu}}
%\email{C$\_$Fischbacher@baylor.edu}
\urladdr{\url{https://math.artsandsciences.baylor.edu/person/christoph-fischbacher-phd}}
%\urladdr{https://math.artsandsciences.baylor.edu/person/christoph-fischbacher-phd}

\author[F.\ Gesztesy]{Fritz Gesztesy}
\address{Department of Mathematics, Baylor University, Sid Richardson Bldg., 1410 S.\,4th Street, Waco, TX 76706, USA}
\email{\mailto{Fritz\_Gesztesy@baylor.edu}}
%\email{Fritz$\_$Gesztesy@baylor.edu}
\urladdr{\url{https://math.artsandsciences.baylor.edu/person/fritz-gesztesy-phd}}
%\urladdr{https://math.artsandsciences.baylor.edu/person/fritz-gesztesy-phd}

\author[L.\ L.\ Littlejohn]{Lance L. Littlejohn}
\address{Department of Mathematics, Baylor University, Sid Richardson Bldg., 1410 S.\,4th Street, Waco, TX 76706, USA}
\email{\mailto{Lance\_Littlejohn@baylor.edu}}
%\email{Lance$\_$Littlejohn@baylor.edu}
\urladdr{\url{https://math.artsandsciences.baylor.edu/person/lance-littlejohn-phd}}
%\urladdr{https://math.artsandsciences.baylor.edu/person/lance-littlejohn-phd}

\dedicatory{Dedicated with great affection and admiration to Henk de Snoo \\ on the happy occasion of his 80th birthday}

\date{\today}
%\thanks{Appeared in {\it .}
\@namedef{subjclassname@2020}{\textup{2020} Mathematics Subject Classification}
\subjclass[2020]{Primary: 47A05, 47A53. Secondary: 47B02, 47E05, 47F05}
\keywords{Deficiency indices, limit circle case.}

%%%%%%%%%%%%%%%%%%%%%%%%%%%%%%%
\begin{abstract}
This largely pedagogical note recalls some facts on defect numbers of products of closed operators employing results from the theory of semi-Fredholm operators and then applies these facts to positive integer powers of symmetric operators and subsequently to certain minimal Sturm--Liouville and minimal higher even-order ordinary differential operators. We also point out some unexpected missed opportunities when comparing the work of different groups on this subject.  
\end{abstract}
%%%%%%%%%%%%%%%%%%%%%%%%%%%%%%%

\maketitle

{\scriptsize{\tableofcontents}}
%\normalsize

%%%%%%%%%%%%%%%%%%%%%%%%%%%%%%%
%%%%%%%%%%%%%%%%%%%%%%%%%%%%%%%
\section{Introduction} \lb{s1}
%%%%%%%%%%%%%%%%%%%%%%%%%%%%%%%
%%%%%%%%%%%%%%%%%%%%%%%%%%%%%%%

A very happy birthday, Henk! We hope this note will cause some smiles on your part.

This primarily pedagogical note originated with the following explicit question: ``What are the deficiency indices of positive integer powers of of the minimal Legendre operator in $L^2((-1,1))$?'' It did not take long to realize that this question, actually, a much more general one, had long been answered by at least three different groups. 

In 1950 (Engl. translation in 1953) I.~M.~Glazman \cite{Gl53} determined the deficiency indices of products of densely defined and closed linear operators $T_1$ and $T_2$ in a complex separable Hilbert space assuming finite defect numbers of $T_1, T_2$, that is,
\begin{equation}
\defe(T_j,0) = \dim(\ker(T_j^*)) < \infty, \quad j=1,2,
\end{equation}
and $0$ in the domain of regularity of $T_1, T_2$, that is, there exists constants $c_{T_j} \in (0,\infty)$ such that 
\begin{equation}
\|T_j f\|_{\cH} \geq c_{T_j} \|f\|_{\cH}, \quad f \in \dom(T_j), \; j=1,2.
\end{equation}
Under these hypotheses, Glazman proved that 
\begin{equation}
\defe(T_2 T_1,0) = \defe(T_1,0) + \defe(T_2,0). 
\end{equation}
In addition, Glazman proved the result that if $S$ is symmetric with with finite deficiency indices
\begin{equation}
n_{\pm}(S) = \dim(\ker(S^* \mp i I_{\cH})) \in \bbN,
\end{equation}
then
\begin{equation}
n_{\pm}\big(S^2\big) = n_+(S) + n_-(S)     \lb{1.5} 
\end{equation}
(These results extend of course to higher positive integer powers of $S$.) These 1950 results by Glazman are also mentioned in his 1965 book \cite[p.~24--27]{Gl65}. 

R.~M.~Kauffman, T.~Read, and A.~Zettl, in their 1977 lecture notes \cite[Ch.~V.4]{KRZ77} on the deficiency indices of powers of ordinary differential expressions are citing Glazman's 1950 paper, but not his 1965 book. 

At the same time, H.~Behncke and H.~Focke also derived \eqref{1.5} (and its higher positive integer power analog) in 1977 as a byproduct of their study of stability properties of of deficiency indices with respect to sufficiently small relative bounded perturbations. In addition, Behncke and Focke also deal with the case of possibly infinite deficiency indices. Interestingly, Behncke and Focke do not cite Glazman and, being essentially simultaneous with Kauffman, Read, and Zettl, neither group appears to have been aware of the activities of the other. 

In addition to the three groups mentioned, there has been considerable interest in studying powers and generally, products of symmetric operators, both, in an abstract context (see, e.g., \cite{AKK95}, \cite{Ar23}, \cite{Ar24}, \cite[App.~D]{BHS20}, \cite{BN93}, \cite{DM23}, \cite{HKM00}, \cite{Sc83}) and particularly in the context of (minimally) defined differential operators (see, e.g., \cite{AS04}, \cite{CSE99}, \cite{Ch73}, \cite{Ev80}, \cite{EG71}, \cite{EG72}, \cite{EG72a}, \cite{EG74/75}, \cite{EG75}, \cite{EKLWY07}, \cite{ELW02}, \cite{EZ77}, \cite{EZ78}, \cite{HKM00}, \cite{Ka73}, \cite{Ka75}, \cite[Ch.~V]{KRZ77}, \cite{Ro84}, \cite{Ze74}, \cite{Ze75a}, \cite{Ze75}, \cite{Ze76}). For fundamental papers on the number of $L^2$-solutions associated with $n$th-order differential equations, $n \in \bbN)$ (including the case of matrix-valued coefficients), see, for instance,  \cite{Ev59}, \cite{KR73/74}, \cite{KR74/75}, \cite{LM01}, \cite{LM03}, \cite{Mi01}, \cite{Or53},  \cite{Ze76a}, and the literature cited therein. 

In Section \ref{s2} we recall the basic abstract results of defect numbers of products of densely defined closed operators and, utilizing the notion of semi-Fredholm operators, apply this to positive integer powers of symmetric operators $S$ in $\cH$ in Theorems \ref{t2.8} and \ref{t2.9}. This is then extended to real polynomials of $S$, recovering a result of Behncke and Focke (and extending the corresponding results of Glazman). We also provide a discussion of the product formula for (semi-)Fredholm operators and in this context note that Sandovici and de Snoo describe the index formula for products of linear relations in \cite{SdS09}. 

In Section \ref{s3} we hint at some possible applications to positive integer powers of certain minimal Sturm--Liouville and minimal higher even-order ordinary differential operators.  In addition, we analyze in great detail the differential expression introduced by J.~Chaudhuri and W.~N.~Everitt \cite{CE69} 
\begin{equation}
\tau_{2,CE} = - \f{d}{dx} \f{1}{6} (x+1)^4 \f{d}{dx} + (x+1)^2, \quad x \in [0,\infty),    \lb{1.6}
\end{equation}
which we reduce to the special case $\alpha = \sqrt{33}/2 \in (1,3)$ of the following one-parameter Bessel differential expression
\begin{equation}
\tau_{2,\alpha} = - \f{d^2}{dx^2} + \f{\alpha^2 - (1/4)}{(1-x)^2}, \quad \alpha \in [1,\infty), \; x \in [0,1),   \lb{1.7} 
\end{equation}
implying 
\begin{align}
\begin{split} 
\tau_{4,\alpha} = \tau_{2,\alpha}^2 = \f{d^4}{dx^4} - \f{d}{dx} \f{2 \alpha^2 - (1/2)}{(1-x)^2} \f{d}{dx} 
+ \f{\alpha^4 - (13/2) \alpha^2 + (5/4)^2}{(1-x)^4},&     \lb{1.8} \\
\alpha \in [1,\infty), \; x \in [0,1).&
\end{split}
\end{align}
The point of this example lies in the facts 
\begin{align}
& \defe(T_{2,\alpha,min},0) = 1,     \lb{1.9} \\
& \defe\big(T_{2,\alpha,min}^2,0\big) = 2 = 2 \defe(T_{2,\alpha,min},0),      \lb{1.10} \\
& \defe(T_{4,\alpha,min},0) = 3; \quad \alpha \in [1,3),  \lb{1.11}
\end{align}
where \eqref{1.10} is of course in agreement with \eqref{1.5}, and \eqref{1.11} is rather remarkable. Here $T_{2,\alpha,min}$ and $T_{4,\alpha,min}$ denote the minimal operators associated with the differential expressions $\tau_{2,\alpha}$ and $\tau_{4,\alpha}$ in $L^2((0,1))$. The property $\defe(T_{4,\alpha,min},0) = 3$ in \eqref{1.11} is dubbed the ``limit-$3$ property'' of $T_{4,\alpha,min}$ in \cite{CE69} and we refer, for instance, also to \cite{EG72}, \cite{EG72a}, \cite{EG74/75}, \cite{EG75}, \cite{EHW74}, 
for more details in this connection. 

We conclude Section \ref{s3} with some applications to partial differential operators. First, we study strongly singular, homogenous perturbations of the Laplacian on $\bbR^n$ of the form 
\begin{equation}
\big[-\Delta_n - \{[(n-1)(n-3)/4] + L(L+n-2)\} |x|^{-2}\big]\big|_{C_0^\infty(\bbR^n\backslash\{0\}}.     \lb{1.12} 
\end{equation}
It is well-known that this operator is unitarily equivalent to a direct sum of countably many Bessel differential operators, where the first $L+1$ members of this direct sum have deficiency indices both equal to one, while the remaining members are essentially self-adjoint and therefore have deficiency indices both equal to zero. From this, we conclude that the deficiency indices of (the closure of) the operator in \eqref{1.12} are equal to $L+1$ and hence, its $m^{th}$ power has deficiency indices  $m(L+1)$.

Finally, we consider singular perturbations of the Dirichlet Laplacian $T_{\Omega, D}$ in $L^2(\Omega)$, where $\Omega\subset \bbR^n$ is a bounded domain with smooth boundary $\partial\Omega$. By ``singular perturbations", we mean symmetric restrictions $T_{\Omega,h,k}$ of $T_{\Omega, D}$ with defect indices equal to one, which are parametrized by $h\in L^2(\Omega)$ and $k\in C(\partial\Omega)$. The domains of these restrictions are described by the additional non-local boundary condition 
\begin{equation} 
\dom(T_{\Omega,h,k})=\{g\in\dom(T_{\Omega,D})\, | \, (h,g)_{L^2(\Omega)}=(k,\gamma_N g)_{L^2(\partial\Omega)}\},
\end{equation}  
where $\gamma_N$ is the Neumann trace on the boundary $\partial\Omega$. We determine the adjoint $T_{\Omega,h,k}^*$ and the Friedrichs and Krein--von Neumann extensions of $T_{\Omega,h,k}$. Moreover, using von Neumann's theory of self-adjoint extensions of symmetric operators, we describe the one-parameter family of all self-adjoint extensions of $T_{\Omega,h,k}$ and, using the Birman--Krein--Vishik theory, we also describe all of its nonnegative self-adjoint extensions. Applying the general results from Section \ref{s2}, we also get a description of the powers $T_{\Omega,h,k}^m$ and their defect indices. 

Finally, we briefly summarize some of the notation used in this paper: Let $\cH$ be a separable
complex Hilbert space, $(\cdot,\cdot)_{\cH}$ the scalar product in
$\cH$ (linear in the second factor), and $I_{\cH}$ the identity operator in $\cH$.
Next, let $T$ be a linear operator mapping (a subspace of) a
Banach space into another, with $\dom(T)$, $\ran(T)$, and $\ker(T)$
denoting the domain, range, and kernel (i.e., null space) of $T$.
%The closure of a closable operator $S$ is denoted by $\ol S$.

The spectrum
%, essential spectrum, point spectrum (i.e., the set of eigenvalues), discrete 
%spectrum, absolutely continuous spectrum, singularly continuous spectrum, 
and resolvent set of a closed linear operator $T$ in $\cH$ will be denoted by $\sigma(T)$
%, $\sigma_{ess}(\cdot)$, $\sigma_{p}(\cdot)$, $\sigma_{d}(\cdot)$,
%$\sigma_{ac}(\cdot)$, $\sigma_{sc}(\cdot)$, 
and $\rho(T)$, respectively; the field of regularity of a linear operator $T$ is denoted by $\hatt \rho(T)$, and if $T$ is self-adjoint, it's essential spectrum is denoted by $\sigma_{ess}(T)$. 

The Banach space of bounded 
%and compact 
linear operators on $\cH$ is denoted by $\cB(\cH)$, the corresponding two-Hilbert space situation is 
abbreviated by $\cB(\cH_1,\cH_2)$. 
%and $\cB_\infty(\cH)$, respectively. Similarly, the Schatten--von Neumann (the $\ell^2$-based) trace ideals will %subsequently be denoted by $\cB_p(\cH)$, $p \in [1,\infty)$.
%The trace of trace class operators in $\cH$ is denoted by ${\tr}_{\cH}(\cdot)$, (modified)
%Fredholm determinants (cf., e.g., \cite[Ch.~IV, \S2]{GK69} and \cite[Ch.~1, \S7.4]{Ya92}) are %abbreviated %by ${\det}_{k,\cH}(\cdot)$, $k \in \bbN$ (for $k=1$ we simply write
%${\det}_{\cH}(\cdot)$).

%We denote by $E_A(\cdot)$ the family of strongly right-continuous spectral projections of a self-adjoint %operator $A$ in $\cH$ (in particular, $E_A(\lambda) = E_A((-\infty,\lambda])$, $\lambda \in \bbR$).

To simplify notation, we will write $L^2(\Omega)$ instead of $L^2(\Omega; d^nx)$, where $\Omega \subseteq \bbR^n$, $n \in \bbN$, whenever the underlying Lebesgue measure is understood. 

The open upper and lower complex half-planes are abbreviated by $\bbC_{\pm} = \{z \in \bbC \, | \, \pm \Im(z) \in (0,\infty)\}$, and we use the notation $\bbN_0 = \bbN \cup \{0\}$.

%%%%%%%%%%%%%%%%%%%%%%%%%%%%%%%
%%%%%%%%%%%%%%%%%%%%%%%%%%%%%%%
\section{On a Formula of Glazman for the Defect Number of Products of Closed Operators} \lb{s2}
%%%%%%%%%%%%%%%%%%%%%%%%%%%%%%%
%%%%%%%%%%%%%%%%%%%%%%%%%%%%%%%

In this section we discuss and slightly extend Glazman's formula for the defect number of products of densely defined, closed, operators with $0$ in their domain of regularity (see \cite[\S~2]{Gl53}, \cite[Theorem~21, p.~26]{Gl65}) and then apply it to the deficiency indices of polynomials with real coefficients of a densely defined, closed, symmetric operator. 

For the remainder of Section \ref{s2} we make the following assumptions:

%%%%%%%
\begin{hypothesis} \lb{h2.1}
All Hilbert spaces $\cH$, $\cH_j$, $j=1,2,3,\dots$, are assumed to be complex and separable.   \\[1mm]
\end{hypothesis}
%%%%%%%

We start by collecting a number of well-known basic facts that will be useful in the remainder of this note (see, e.g., \cite[Sects.~1.3, 3.1, 3.2]{EE18}, \cite[Sects.~1--3]{Sc12} for details): In the following, $T$ is a linear operator in the separable, complex Hilbert space $\cH$: \\[1mm]
$(1)$ $z \in \bbC$ is called a regular point for $T$ if there exists $c_{z,T} \in (0,\infty)$ such that 
\begin{equation}
\|(T - z I_{\cH})f\|_{\cH} \geq c_{z,T} \|f\|_{\cH}, \quad f \in \dom(T).    \lb{2.1} 
\end{equation}
The set of regular points of $T$ is called the {\it field of regularity} $($also, the {\it regularity domain$)$\,of $T$} and denoted by $\hatt \rho(T)$. Moreover, $\hatt \rho(T)$ is open. \\[1mm]
$(2)$ $z \in \hatt \rho(T)$, then $\ran(T - z I_{\cH})^{\bot}$ is called the {\it deficiency subspace of $T$ at $z$}; its dimension,
\begin{equation}
\defe(T,z) = \defe(T-zI_{\cH},0) = \dim\big((\ran(T - z I_{\cH}))^{\bot}\big)
\end{equation} 
is called the {\it defect number of $T$ at $z$}. \\[1mm] 
$(3)$ If $T$ closable in $\cH$, then \\[1mm]
\hspace*{4.5mm} $(a)$ $\defe(T,\dott)$ is constant on each connected component of $\hatt \rho(T)$. \\[1mm] 
\hspace*{4.5mm} $(b)$ One has 
\begin{equation} 
\hatt \rho\big({\ol T}\big) = \hatt \rho(T), \quad d\big({\ol T},z\big) = d(T,z), \; z \in \hatt \rho(T), 
\end{equation}
\hspace*{10.5mm} and 
\begin{equation}
\ran\big({\ol T} - z I_{\cH}\big) = \ol{\ran(T - z I_{\cH})}, \quad z \in \hatt \rho(T).
\end{equation}
\hspace*{10.5mm} In particular, if $T$ closed (i.e., $T= {\ol T}$), $z \in \hatt \rho(T)$, then $\ran(T - z I_{\cH})$ \\
\hspace*{10.5mm} is a closed linear subspace of $\cH$. \\[1mm]
\hspace*{4.5mm} $(c)$ in addition, if $T$ is densely defined in $\cH$, then 
\begin{equation}
{\ol T} = (T^*)^*, 
\end{equation}
\hspace*{10.5mm} and 
\begin{align}
\cH &= \ol{\ran(T - z I_{\cH})} \oplus \ker(T^* - {\ol z} I_{\cH}), \quad z \in \bbC,  \\
&=  \ol{\ran(T^* - {\ol z} I_{\cH})} \oplus \ker(T - z I_{\cH}), \quad z \in \bbC,  \\
&= \ran\big({\ol T} - z I_{\cH}\big) \oplus \ker(T^* - {\ol z} I_{\cH}), \quad z \in \hatt \rho(T).
\end{align}
$(4)$ If $T$ is closed, then
\begin{align}
\begin{split} 
\rho(T) &= \{z \in \hatt \rho(T) \, | \, \defe(T,z) = 0\}     \\
&= \{z \in \hatt \rho(T) \, | \, \ran(T - z I_{\cH}) = \cH\} \subseteq \hatt \rho(T),   
\end{split} \\
\sigma(T) &= \bbC \backslash \rho(T).
\end{align}
$(5)$ Let $T$ be densely defined in $\cH$ and closed. \\[1mm]  
\hspace*{4.5mm} $(a)$ If $z \in \bbC$, then $\ran(T - z I_{\cH})$ is closed in $\cH$ if and only if 
$\ran(T^* - {\ol z} I_{\cH})$ is \\
\hspace*{10.5mm} closed in $\cH$.  \\
\hspace*{4.5mm} $(b)$ If $z \in \hatt \rho(T)$, then $\ran(T - z I_{\cH})$ and $\ran(T^* - {\ol z} I_{\cH})$ are closed in $\cH$. \\[1mm] 
$(6)$ If $T$ is densely defined in $\cH$, then 
\begin{equation}
(\ran(T - zI_{\cH}))^{\bot} = \ker(T^* - {\ol z} I_{\cH}), \quad z \in \bbC.
\end{equation}
$(7)$ If $T$ is densely defined and closable in $\cH$, then 
\begin{equation}
(\ran(T^* - {\ol z} I_{\cH}))^{\bot} = \ker\big({\ol T} - z I_{\cH}\big), \quad z \in \bbC.
\end{equation} 
$(8)$ If $T$ is densely defined in $\cH$, then $T$ is called {\it symmetric} (resp., {\it self-adjoint}) if $T \subseteq T^*$ (resp., $T=T^*$). \\[1mm]
$(9)$ Let $T$ be symmetric in $\cH$.  \\[1mm]
\hspace*{4.5mm} $(a)$ Then $T$ is closable in $\cH$. \\[1mm] 
\hspace*{4.5mm} $(b)$ $\bbC \backslash \bbR \subseteq \hatt \rho(T)$. \\[1mm]
\hspace*{4.5mm} $(c)$ If $z \in \hatt \rho(T)$, then $\ran\big(T^* - {\ol z} I_{\cH}\big) = \cH$ and $\ker\big({\ol T} - z I_{\cH}\big) = \{0\}$. \\[1mm]   
\hspace*{4.5mm} $(d)$ If for some $c \in \bbR$, $T \geq c I_{\cH}$ (resp., $T \leq c I_{\cH}$), then 
$(-\infty,c) \subset \hatt \rho(T)$ (resp., \\
\hspace*{10mm} $(c,\infty) \subset \hatt \rho(T)$). \\[1mm] 
\hspace*{4.5mm} $(e)$ If $\hatt \rho(T)$ contains a real number (e.g., if $T$ is bounded from below or bounded  \\ 
\hspace*{10mm} from above), then $\hatt \rho(T)$ is connected and $\defe(T,\dott)$ is constant on $\hatt \rho(T)$.  
\\[1mm]
$(10)$ If $0 \in \hatt \rho(T)$ (with $\|Tf\|_{\cH} \geq c_{0,T} \|f\|_{\cH}$, $f \in \dom(T)$), then $0 \in \hatt \rho\big(T^2\big)$ and \\
\hspace*{6.5mm} $T^2 \geq c_{0,T}^2 I_{\cH}$. \\[1mm] 
$(11)$ If $T_2 : \dom(T_2) \to \cH_3$, $\ol{\dom(T_2)} = \cH_2$,  and $\ol{\dom(T_2 T_1)} = \cH_1$, then $T_1$ is densely defined in $\cH_1$ and 
\begin{equation} 
T_1^* T_2^* \subseteq (T_2 T_1)^*.
\end{equation}
If in addition, $T_2 \in \cB(\cH_2,\cH_3)$, then $(T_2 T_1)^*$ exists and 
\begin{equation} 
T_1^* T_2^* = (T_2 T_1)^*.     \lb{2.14}
\end{equation}
Similarly, if $T_1: \dom(T_1) \to \cH_2$, $\ol{\dom(T_1)} = \cH_1$ and $\dim(\ker(T_1^*)) < \infty$ (i.e., if $T_1$ is right semi-Fredholm), and if $T_2: \dom(T_2) \to \cH_3$, $\ol{\dom(T_2)} = \cH_2$, then $(T_2 T_1)^*$ exists and \eqref{2.14} holds (cf.\ Proposition \ref{p3.1}).

In this context we recall the standard convention,
\begin{equation}
\dom(T_2T_1) = \{g \in \dom(T_1) \,|\, T_1 g \in \dom(T_2)\}.  
\end{equation}

Since operators with closed range will play a particular role in the following, we mention the following criterion:

%%%%%%%
\begin{lemma} $($\cite[Lemma~A.1]{ASS94}, \cite[Lemma~2.44]{BB13}.$)$ \lb{l2.2}  ${}$ \\
Suppose $T\colon \dom(T) \to \cH_2$, $\dom(T) \subseteq \cH_1$, is densely defined and closed. Then the following items $(i)$--$(iii)$ are equivalent: \\[1mm]
$(i)$ \quad $\ran(T)$ is closed in $\cH_2$. \\[1mm]
$(ii)$ \; $0 \notin \sigma(T^*T)$, or, $0$ is an isolated point in $\sigma(T^*T)$. \\
$(iii)$ There exists $c_T \in (0,\infty)$ such that \\[1mm]    
\hspace*{7mm}  $\|T f\|_{\cH} \geq c_T \|f\|_{\cH}$, \, $f \in \dom(T) \cap [\ker(T)]^{\bot}$.   \\[1mm] 
In particular, $\ran(T)$ is closed if $0 \in \hatt \rho(T)$. 
\end{lemma}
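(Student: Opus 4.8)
The plan is to use von Neumann's theorem together with the polar decomposition in order to reduce the spectral condition $(ii)$ and the inequality $(iii)$ to a single statement about the nonnegative self-adjoint operator $T^*T$, treating the closed-range condition $(i)$ separately by completeness and the open mapping theorem. Since $T$ is densely defined and closed, $T^*T$ is self-adjoint and nonnegative, and the polar decomposition $T = U|T|$ with $|T| = (T^*T)^{1/2}$ furnishes $\dom(|T|) = \dom(T)$, $\|Tf\|_{\cH_2} = \||T|f\|_{\cH_1}$ for all $f \in \dom(T)$, and $\ker(|T|) = \ker(T)$ with $[\ker(T)]^{\bot} = \ol{\ran(|T|)}$. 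I would record these facts at the outset, since they are precisely what permit passing freely among the inequality in $(iii)$, the range in $(i)$, and the spectrum in $(ii)$; note in particular that $0$ is an isolated point of, or absent from, $\sigma(T^*T)$ exactly when the same holds for $\sigma(|T|)$, via the map $\lambda \mapsto \lambda^2$.

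For $(ii) \Leftrightarrow (iii)$ I would argue spectrally using the spectral family $\{F_\lambda\}$ of $|T|$. If $0$ is isolated in, or absent from, $\sigma(|T|)$, choose $\delta > 0$ with $\sigma(|T|) \cap (0,\delta) = \emptyset$; then every $f \in \dom(T) \cap [\ker(T)]^{\bot}$ satisfies $f = (I_{\cH_1} - F(\{0\}))f$, whence $\|Tf\|^2 = \int_{[\delta,\infty)} \lambda^2 \, d(F_\lambda f, f) \geq \delta^2 \|f\|^2$, which is $(iii)$. Conversely, if $0$ were a non-isolated point of $\sigma(|T|)$, then $F((0,\eps)) \neq 0$ for every $\eps > 0$, and choosing unit vectors $f_\eps \in \ran(F((0,\eps))) \subseteq \dom(T) \cap [\ker(T)]^{\bot}$ gives $\|Tf_\eps\| \leq \eps$, contradicting $(iii)$. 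The only subtlety here is that the identity $\|Tf\|^2 = (T^*Tf,f)$ is available only on $\dom(T^*T)$, which is exactly why I work with $|T|$ (whose domain is $\dom(T)$) rather than with $T^*T$ directly.

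The implication $(iii) \Rightarrow (i)$ is a direct completeness argument: given $Tf_n \to g$, decompose $f_n = f_n' + f_n''$ with $f_n' \in \ker(T)$ and $f_n'' \in \dom(T) \cap [\ker(T)]^{\bot}$; since $Tf_n = Tf_n''$, inequality $(iii)$ forces $(f_n'')$ to be Cauchy in $\cH_1$, and closedness of $T$ then produces a limit $f'' \in \dom(T)$ with $Tf'' = g$, so $g \in \ran(T)$. For $(i) \Rightarrow (iii)$ I would invoke the open mapping theorem: equip $\dom(T) \cap [\ker(T)]^{\bot}$ with the graph norm, under which it is a Banach space (both $T$ and $\ker(T)$ being closed), and observe that the restriction $T_0$ of $T$ to this subspace is a continuous bijection onto the Banach space $\ran(T)$; boundedness of $T_0^{-1}$, combined with the fact that the graph norm dominates the $\cH_1$-norm, then delivers the inequality in $(iii)$. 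Together with $(ii) \Leftrightarrow (iii)$ this closes the equivalence $(i) \Leftrightarrow (ii) \Leftrightarrow (iii)$.

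Finally, the ``in particular'' claim is immediate: $0 \in \hatt\rho(T)$ means $\|Tf\| \geq c_{0,T}\|f\|$ for all $f \in \dom(T)$, which contains the restricted inequality $(iii)$ (and forces $\ker(T) = \{0\}$), so $\ran(T)$ is closed by $(iii) \Rightarrow (i)$. I expect no genuine obstacle beyond careful bookkeeping: the one thing to handle with care is the gap between $\dom(T)$ and $\dom(T^*T)$, and the polar decomposition resolves this cleanly.
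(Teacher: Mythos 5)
Your proof is correct and complete. Note, however, that the paper itself offers no proof of Lemma \ref{l2.2}: it is stated as a known fact with references to \cite[Lemma~A.1]{ASS94} and \cite[Lemma~2.44]{BB13}, so there is no in-paper argument to compare against. Your route --- reducing everything to the nonnegative self-adjoint operator $|T| = (T^*T)^{1/2}$ via the polar decomposition, proving $(ii)\Leftrightarrow(iii)$ spectrally, $(iii)\Rightarrow(i)$ by a Cauchy-sequence/closedness argument, and $(i)\Rightarrow(iii)$ by the bounded inverse theorem applied to $T$ restricted to $\dom(T)\cap[\ker(T)]^{\bot}$ equipped with the graph norm --- is the standard one and is essentially what the cited sources do. You correctly identify and handle the one genuine subtlety, namely that $\|Tf\|^2=(T^*Tf,f)$ is only valid on $\dom(T^*T)\subsetneq\dom(T)$, by working with $|T|$ (whose domain equals $\dom(T)$ and which satisfies $\||T|f\|=\|Tf\|$) and using that $0$ is isolated in or absent from $\sigma(T^*T)$ if and only if the same holds for $\sigma(|T|)$. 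All the auxiliary claims check out: $F(\{0\})$ is the projection onto $\ker(|T|)=\ker(T)$, the spectral subspaces $\ran(F((0,\varepsilon)))$ consist of vectors in $\dom(T)\cap[\ker(T)]^{\bot}$ on which $\|Tf\|\leq\varepsilon\|f\|$, the subspace $\dom(T)\cap[\ker(T)]^{\bot}$ is closed in the graph norm, and the restriction of $T$ to it is a continuous bijection onto $\ran(T)$. The concluding ``in particular'' statement is indeed immediate from $(iii)\Rightarrow(i)$.
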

%%%%%%% 

This can be contrasted with \eqref{2.1} for $z=0$. 

Next, we recall the notion of Fredholm and left semi-Fredholm operators and their index:

%%%%%%%
\begin{definition} \lb{d2.3}
Suppose $T:\dom(T) \to \cH_2$, $\dom(T) \subseteq \cH_1$, is densely defined and closed and $\ran(T)$ is closed in $\cH_2$. \\[1mm] 
$(i)$ Then $T$ is Fredholm if 
\begin{equation}
\dim(\ker(T)) + \dim(\ker(T^*)) < \infty.
\end{equation}
$(ii)$ Similarly, $T$ is a left semi-Fredholm operator if 
\begin{equation}
dim(\ker(T)) < \infty.
\end{equation}
$(iii)$ Finally, $T$ is a right semi-Fredholm operator if 
\begin{equation}
\dim(\ker(T^*)) < \infty.
\end{equation}

In all cases, the $($semi-$)$Fredholm index, $\ind(T)$, of $T$ is defined via 
\begin{equation}
\ind(T) = \dim(\ker(T)) - \dim(\ker(T^*)) \in \bbZ \cup \{-\infty, +\infty\}. 
\end{equation}
$($Of course, $\ind(T) \in \bbZ$ if $T$ is Fredholm.$)$
\end{definition}
%%%%%%%

One notes that under the hypothesis in Definition \ref{d2.3}, 
\begin{align}
\begin{split} 
& \text{$T$ is a (left semi-) Fredholm operator} \\
& \quad \text{if and only if} \\
& \text{$T^*$ is a (right semi-) Fredholm operator.} 
\end{split} 
\end{align}
In addition, since
\begin{equation}
\ker(T) = \ker(T^*T), \quad \ker(T^*) = \ker(TT^*),
\end{equation}
one gets
\begin{equation}
\ind(T) = \dim(\ker(T^*T)) - \dim(\ker(TT^*)) \in \bbZ \cup \{-\infty,+\infty\},  
\end{equation}
which represents a reduction to the case of self-adjoint and nonnegative operators $T^*T \geq 0$, and $TT^* \geq 0$, in $\cH_1$, and $\cH_2$, respectively. Moreover, we recall that 
\begin{equation}
\ind(T^*) = - \ind(T).
\end{equation}

In the special case where $T=T^*$ is self-adjoint in $\cH$ we note that \cite[p.~421--424]{EE18} implies 
\begin{align}
\begin{split} 
& T \, \text{ is Fredholm if and only if there exists $\varepsilon \in (0,\infty)$ such that} \\
& \quad \sigma_{ess}(T) \cap (-\varepsilon,\varepsilon) = \emptyset.     \lb{2.24} 
\end{split}
\end{align}

Next, we recall the following convenient criterion for $T$ to be a Fredholm operator in terms of the self-adjoint and nonnegative operators $T^*T$ and $TT^*$ in $\cH_1$ and $\cH_2$, respectively: Suppose $T:\dom(T) \to \cH_2$, $\dom(T) \subseteq \cH_1$, is densely defined and closed. Then (see, e.g., \cite[Appendix]{ASS94}, \cite[p.~724, 740]{GNZ24}, 
\begin{align}
\begin{split} 
& T \, \text{ is Fredholm if and only if there exists $\varepsilon \in (0,\infty)$ such that}     \lb{2.25} \\
& \quad \inf(\sigma_{ess}(T^*T)) \geq \varepsilon^2 \, \text{ and } \, \inf(\sigma_{ess}(TT^*)) \geq \varepsilon^2.  
\end{split} 
\end{align}
To illustrate the fact \eqref{2.25} one can argue as follows: Introduce the self-adjoint operator 
\begin{equation}
Q = \begin{pmatrix} 0 & T^* \\ T & 0 \end{pmatrix}, \quad \dom(Q) = \dom(T) \oplus \dom(T^*), 
\end{equation}
in $\cH_1 \oplus \cH_2$, and note that 
\begin{align}
& \ker(Q) = \ker(T) \oplus \ker(T^*), \quad \ran(Q) = \ran(T^*) \oplus \ran(T),    \lb{2.27} \\
& \sigma_3 Q \sigma_3 = - Q, \quad \sigma_3 = \begin{pmatrix} I_{\cH_1} & 0 \\ 0 & - I_{\cH_2} \end{pmatrix} 
= \sigma_3^* = \sigma_3^{-1},    \lb{2.28} \\
& Q^2 = \begin{pmatrix} T^*T & 0 \\ 0 & TT^* \end{pmatrix} = T^*T \oplus TT^*,   \lb{2.29}
\end{align}
in particular, $\sigma(Q)$, a closed subset of the real line, is symmetric with respect to the origin $0$.
 
Consequently, employing \eqref{2.24} and \eqref{2.27}--\eqref{2.29}, one obtains 
\begin{align}
& \text{$Q$ is Fredholm $\iff$ $T$ and $T^*$ are Fredholm}     \no \\
& \text{\quad $\iff$ $T$ is Fredholm $\iff$ $T^*$ is Fredholm}     \no \\
& \text{\quad \, $\Longrightarrow$ $Q^2$ is Fredholm $\iff$ $T^*T$ and $TT^*$ are Fredholm.}   \lb{2.30}
\end{align}
Similarlly,
\begin{align}
& \text{$T^*T$ and $TT^*$ are Fredholm $\iff$ there exists $\varepsilon \in (0,\infty)$ such that}    \no \\
& \text{\quad  $\sigma_{ess}(T^*T) \subseteq [\varepsilon^2,\infty)$ and $\sigma_{ess}(TT^*) \subseteq [\varepsilon^2,\infty)$ $\iff$ $Q^2$ is Fredholm}     \no \\
& \text{\quad $\iff$ there exists $\varepsilon \in (0,\infty)$ such that $\sigma_{ess}\big(Q^2\big) \subseteq [\varepsilon^2,\infty)$}    \no \\
& \text{\quad $\iff$ $\sigma_{ess}(Q) \subseteq (-\infty,-\varepsilon] \cup [\varepsilon,\infty)$ $\iff$ $Q$ is Fredholm}  
\no \\
& \text{\quad $\iff$ $T$ (and hence $T^*$) is Fredholm.}     \lb{2.31} 
\end{align}
Together, \eqref{2.30} and \eqref{2.31} prove \eqref{2.25}, among other facts. 

Although not needed in this context, we also recall, see \cite{De78},
\begin{align}
& \, \sigma(T^*T)\backslash\{0\} = \sigma(TT^*)\backslash\{0\},    \\
& \dim(\ker(T^*T - \lambda I_{\cH_1})) = \dim(\ker(TT^* - \lambda I_{\cH_1})), \quad \lambda \in \bbR\backslash\{0\}.
\end{align}
We refer to \cite{De78} (see also \cite{ASS94}, \cite[App.~A]{GGHT12}) for additional facts on $Q$, $T^*T$, $TT^*$. 

The celebrated product formula for the index of (left semi-) Fredholm operators then reads as follows:

%%%%%%%
\begin{theorem} \lb{t2.4} $($see, e.g., \cite{BF77}, \cite[Sect.~2.6]{BB13}, \cite{CL63}, \cite[Sect.~1.3]{EE18},  \cite{Gl53}, \cite[p.~24--27]{Gl65}, \cite[Chs.~XI, XVII]{GGK90}, \cite[Sect.~IV.2 ]{Go85}, \cite[Sect.~5.1]{Ku20}, \cite[Chs.~5, 7]{Sc02}$)$. ${}$ \\[1mm] 
$(i)$ Suppose $T_j$, $j=1,2$, are Fredholm operators $($hence, densely defined and closed\,$)$ with $\dom(T_1) \subseteq \cH_1$, $\ran(T_1) \subseteq \cH_2$, $\dom(T_2) \subseteq \cH_2$, $\ran(T_2) \subseteq \cH_3$. Then  $T_2 T_1$ is Fredholm $($hence, densely defined and closed\,$)$, and 
\begin{equation}
\ind(T_2 T_1) = \ind(T_1) + \ind(T_2) \in \bbZ. 
\end{equation}
$(ii)$ Suppose $T_j$, $j=1,2$, are left semi-Fredholm operators  $($hence, densely defined and closed\,$)$ with $\dom(T_1)\subseteq \cH_1$, $\ran(T_1) \subseteq \cH_2$, $\dom(T_2) \subseteq \cH_2$, $\ran(T_2) \subseteq \cH_3$, and assume that $T_2 T_1$ is densely defined in $\cH_1$. Then $T_2 T_1$ is left semi-Fredholm $($hence closed\,$)$, and 
\begin{equation}
\ind(T_2 T_1) = \ind(T_1) + \ind(T_2) \in \bbZ \cup \{-\infty\}.     \lb{2.35} 
\end{equation}
\end{theorem}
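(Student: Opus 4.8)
The plan is to establish the Fredholm case $(i)$ first---separating an analytic step (that $T_2 T_1$ is closed with closed range) from a purely algebraic dimension count---and then to deduce the left semi-Fredholm case $(ii)$ by a dichotomy on the cokernel dimensions.

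I would begin with the analytic facts. In $(i)$ density of $\dom(T_2 T_1)$ is automatic: a Fredholm operator is in particular right semi-Fredholm, so the last part of item $(11)$ applies and yields both $\ol{\dom(T_2 T_1)} = \cH_1$ and the identity $(T_2 T_1)^* = T_1^* T_2^*$ of \eqref{2.14}; in $(ii)$ density is assumed. For closedness and closed range I would use that each $T_j$, having closed range, is bounded below on $\dom(T_j)\cap[\ker(T_j)]^\bot$ by Lemma \ref{l2.2}, so that the restriction of $T_j$ there is a homeomorphism onto $\ran(T_j)$ with a bounded inverse $S_j$ defined on all of $\ran(T_j)$. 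Given $f_n \to f$ with $T_2 T_1 f_n \to g$, I would set $h_n := T_1 f_n \in \ran(T_1)\cap\dom(T_2)$ and split $h_n$ along $\ker(T_2)\oplus[\ker(T_2)]^\bot$; boundedness below of $T_2$ forces the $[\ker(T_2)]^\bot$-part to converge, while a short contradiction argument (using $S_1 h_n \to (I_{\cH_1} - P_{\ker(T_1)})f$ together with finite-dimensionality of $\ker(T_2)$ and closedness of $\ran(T_1)$) shows that $(h_n)$ is bounded; a convergent subsequence of its $\ker(T_2)$-part then produces $h \in \ran(T_1)\cap\dom(T_2)$ with $T_1 f = h$ and $T_2 h = g$, i.e.\ $f \in \dom(T_2 T_1)$ and $T_2 T_1 f = g$. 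For closed range I would use $\ran(T_2 T_1) = T_2(\ran(T_1)\cap\dom(T_2))$ and note that, since $\ker(T_2)$ is finite-dimensional, the subspace $\ran(T_1) + \ker(T_2)$ is closed; hence the restriction of $T_2$ to $\ran(T_1)+\ker(T_2)$ is again bounded below modulo its finite-dimensional kernel and has closed range equal to $\ran(T_2 T_1)$. This is the step I expect to be the main obstacle, because for unbounded closed operators the product of two closed operators need not be closed: it is exactly the semi-Fredholm structure---the bounded partial inverses $S_j$ and the positive angle between $\ran(T_1)$ and the finite-dimensional $\ker(T_2)$---that rescues closedness and closed range.

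With these facts in hand, the index identity in $(i)$ follows from the six-term kernel--cokernel exact sequence attached to the composition,
\begin{equation*}
0 \to \ker(T_1) \hookrightarrow \ker(T_2 T_1) \xrightarrow{\,T_1\,} \ker(T_2) \to \cH_2/\ran(T_1) \xrightarrow{\,[T_2]\,} \cH_3/\ran(T_2 T_1) \to \cH_3/\ran(T_2) \to 0,
\end{equation*}
whose exactness at each spot is a routine diagram chase. All six spaces are finite-dimensional in the Fredholm case, so the alternating sum of their dimensions vanishes; rewriting each quotient $\cH_j/\ran(\,\cdot\,)$ as $\ker((\,\cdot\,)^*)$ via item $(6)$ and the closed-range hypothesis, the vanishing rearranges precisely into $\ind(T_2 T_1) = \ind(T_1) + \ind(T_2) \in \bbZ$, in accordance with Definition \ref{d2.3}. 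That $T_2 T_1$ is Fredholm is read off simultaneously, since both $\ker(T_2 T_1)$ and $\cH_3/\ran(T_2 T_1)$ are finite-dimensional.

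For $(ii)$, the first three terms of the same exact sequence already give $\dim(\ker(T_2 T_1)) \leq \dim(\ker(T_1)) + \dim(\ker(T_2)) < \infty$, so, together with the closed range established above, $T_2 T_1$ is left semi-Fredholm. To verify \eqref{2.35} in $\bbZ\cup\{-\infty\}$ I would argue by cases: if both $T_1,T_2$ are Fredholm, apply $(i)$; otherwise some $\dim(\ker(T_j^*)) = \infty$, the right-hand side equals $-\infty$, and it suffices to show $\dim(\ker((T_2 T_1)^*)) = \infty$. Using $(T_2 T_1)^* = T_1^* T_2^*$ from \eqref{2.14}, one has $\ker(T_1^* T_2^*) = (T_2^*)^{-1}(\ker(T_1^*)) \supseteq \ker(T_2^*)$; if $\dim(\ker(T_2^*)) = \infty$ this is immediate, while if $\dim(\ker(T_1^*)) = \infty$ one intersects $\ker(T_1^*)$ with the finite-codimension subspace $\ran(T_2^*) = [\ker(T_2)]^\bot$ and pulls back through $T_2^*$ to obtain again an infinite-dimensional kernel. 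In both cases $\ind(T_2 T_1) = -\infty$, which matches the right-hand side and completes the plan.
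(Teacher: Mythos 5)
Your argument is essentially correct, but it is a genuinely different route from the one the paper takes. The paper does not prove Theorem \ref{t2.4} from scratch: it quotes the bounded (semi-)Fredholm product theorem from the literature (Schechter, Gohberg--Goldberg--Kaashoek, et al.) and, in Remark \ref{r2.5}, reduces the unbounded case to the bounded one by replacing each $T_j$ with $\wti T_j \in \cB(\cH_{\wti T_j},\cH_{j+1})$ acting out of its graph Hilbert space, observing that kernels and cokernels are unchanged, and flagging denseness of $\dom(T_2T_1)$ as the only non-automatic point. You instead work directly with the unbounded operators: you extract bounded partial inverses $S_j$ from Lemma \ref{l2.2}, prove closedness and closed range of $T_2T_1$ by hand (using the positive angle between $\ran(T_1)$ and the finite-dimensional $\ker(T_2)$ --- this is indeed the crux, since products of closed operators are not closed in general), and then obtain the index identity from the six-term kernel--cokernel exact sequence. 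Your approach is more self-contained and makes the analytic mechanism visible; the paper's graph-norm reduction is shorter and outsources all the algebra to the bounded theory, at the cost of hiding where the semi-Fredholm hypotheses actually enter.

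Two points in your write-up need small repairs. First, for unbounded $T_2$ the induced map $[T_2]\colon \cH_2/\ran(T_1) \to \cH_3/\ran(T_2T_1)$ is only defined on cosets admitting a representative in $\dom(T_2)$, so the exact sequence as written requires $\dom(T_2)+\ran(T_1)=\cH_2$; in case $(i)$ this does hold (a dense subspace containing the closed, finite-codimensional $\ran(T_1)$ must be all of $\cH_2$), but you should say so rather than call the chase routine --- and note that in case $(ii)$ this can fail, which is consistent with your using only the first three terms there. Second, in case $(ii)$ the equality $(T_2T_1)^*=T_1^*T_2^*$ of \eqref{2.14} is not available: item $(11)$ (equivalently, Proposition \ref{p3.1}) requires $T_1$ to be \emph{right} semi-Fredholm, whereas in $(ii)$ you may well have $\dim(\ker(T_1^*))=\infty$. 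Fortunately only the inclusion $T_1^*T_2^*\subseteq (T_2T_1)^*$ is needed, since it already gives $\ker(T_1^*T_2^*)\subseteq \ker((T_2T_1)^*)$, and your dimension count for $\ker(T_1^*T_2^*)$ (via the finite codimension of $\ran(T_2^*)=[\ker(T_2)]^\bot$) then goes through unchanged.
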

%%%%%%%

%%%%%%%
\begin{remark} \lb{r2.5}
$(i)$ Parts $(i)$ and $(ii)$ in Theorem \ref{t2.4} are proved in the special case of bounded (left and right semi-) Fredholm operators in \cite[Theorems~5.7, 5.26, 5.30]{Sc02}. Part $(i)$ in Theorem \ref{t2.4} for unbounded Fredholm operators is proved in \cite[Theorem~7.3]{Sc02}. Unbounded semi-Fredholm operators are discussed in \cite[Theorem~7.32]{Sc02}, except for the product formula \eqref{2.35}. However, replacing $T_1: \dom(T_1) \to \cH_2$, $T_2 : \dom(T_2) \to \cH_3$ by 
\begin{equation} 
\wti T_1 : \cH_{\wti T_1} \to \cH_2, \quad \wti T_2 : \cH_{\wti T_2} \to \cH_3,
\end{equation}
where
\begin{align}
\begin{split} 
& \cH_{\wti T_1} = \big(\dom\big(\wti T_1\big), (\dott,\dott)_{\wti T_1}\big),     \\
& (f_1,g_1)_{\wti T_1} = (T_1 f_1,T_1 g_1)_{\cH_2} + (f_1,g_1)_{\cH_1}, \quad f_1, g_1 \in \dom(T_1),   
\end{split} \\
\begin{split} 
& \cH_{\wti T_2} = \big(\dom\big(\wti T_2\big), (\dott,\dott)_{\wti T_2}\big),     \\
& (f_2,g_2)_{\wti T_2} = (T_2 f_2,T_2 g_2)_{\cH_3} + (f_2,g_2)_{\cH_2}, \quad f_2, g_2 \in \dom(T_2),   
\end{split} 
\end{align}
denote the graph Hilbert spaces associated with the closed operators $T_1$, $T_2$, yields
\begin{equation}
\wti T_1 \in \cB(\cH_{T_1},\cH_2), \quad \wti T_2 \in \cB(\cH_{T_2},\cH_3). 
\end{equation}
Since 
\begin{equation}
\ker(T_j) = \ker\big(\wti T_j\big), \quad \ker(T_j^*) = \ker\big(\big(\wti T_j\big)^*\big), \quad j=1,2,
\end{equation}
the case of unbounded, closed operators is now reduced to that of bounded operators, see, \cite[Remark~1.3.27]{EE18}, \cite[p.~372]{GGK90}, \cite[Corollary~7.6, Lemma~7.7]{Sc02}.

The only subtle detail that does not automatically transfer from the case of unbounded operators to that of bounded operators is the property of $T_2 T_1$ being densely defined in $\cH_1$ in the case of left semi-Fredholm operators. For more details in this context see \cite[p.~176--177]{Sc02}.  \\[1mm]
$(ii)$ For the case of linear relations, see, for instance, Sandovici and de Snoo \cite{SdS09}. 
\hfill $\diamond$
\end{remark}
%%%%%%%

Theorem \ref{t2.4} can now be used to derive a slight improvement of Glazman's result \cite[\S~2]{Gl53}, \cite[Theorem~21, p.~26]{Gl65}, as follows:

%%%%%%%
\begin{theorem} \lb{t2.6}
Suppose $T_j$ are densely defined in $\cH$ and closed, with $0 \in \hatt \rho(T_j)$, $j=1,2$. \\[1mm]
$(i)$ Assume in addition that $\defe(T_j,0) < \infty$, $j=1,2$. Then $T_1 T_2$ and $T_2 T_1$ are densely defined and closed, $T_1$, $T_2$, $T_1 T_2$, $T_2 T_1$ are Fredholm, 
\begin{equation}
0 \in \hatt \rho(T_1 T_2) \cap \hatt \rho(T_2 T_1),    \lb{2.33}
\end{equation}
and 
\begin{equation}
\defe(T_1 T_2,0) = \defe(T_1,0) + \defe(T_2,0) = \defe(T_2 T_1,0) \in \bbN_0. 
\end{equation}
$(ii)$ Assume in addition that $T_1 T_2$ is densely defined. Then $T_1 T_2$ is closed, $T_1$, $T_2$, $T_2 T_1$ are left semi-Fredholm, 
\begin{equation}  
0 \in \hatt \rho(T_1 T_2),
\end{equation}
and 
\begin{equation}
\defe(T_1 T_2,0) = \defe(T_1,0) + \defe(T_2,0) \in \bbN_0 \cup \{\infty\}. 
\end{equation}
\end{theorem}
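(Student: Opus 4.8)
The plan is to deduce the two defect-number identities from the index product formula of Theorem~\ref{t2.4}, exploiting the fact that $0 \in \hatt\rho(T_j)$ forces trivial kernels, so that every index appearing equals minus a defect number. First I would extract the two consequences of the hypothesis $0 \in \hatt\rho(T_j)$: the defining lower bound $\|T_j f\|_{\cH} \geq c_{T_j}\|f\|_{\cH}$ gives $\ker(T_j) = \{0\}$, while Lemma~\ref{l2.2} (its last sentence) shows that $\ran(T_j)$ is closed. Invoking fact~$(6)$ at $z=0$, which identifies $(\ran(T_j))^{\bot} = \ker(T_j^*)$, I then obtain
\begin{equation*}
\ind(T_j) = \dim(\ker(T_j)) - \dim(\ker(T_j^*)) = - \defe(T_j,0), \quad j=1,2.
\end{equation*}
Under the hypothesis of part~$(i)$, $\defe(T_j,0) < \infty$ makes each $T_j$ Fredholm; under the weaker hypothesis of part~$(ii)$ the same data make each $T_j$ left semi-Fredholm, now with $\ind(T_j) = -\defe(T_j,0) \in \bbZ \cup \{-\infty\}$.

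Next I would treat the products. For $f \in \dom(T_2 T_1)$ one has $T_1 f \in \dom(T_2)$, so the two lower bounds compose into
\begin{equation*}
\|T_2 T_1 f\|_{\cH} \geq c_{T_2}\|T_1 f\|_{\cH} \geq c_{T_2} c_{T_1}\|f\|_{\cH},
\end{equation*}
and symmetrically for $T_1 T_2$; this yields $0 \in \hatt\rho(T_2 T_1) \cap \hatt\rho(T_1 T_2)$, hence \eqref{2.33}, and in particular trivial kernels for both products. In part~$(i)$ I would then apply Theorem~\ref{t2.4}$(i)$ to the pairs $(T_1,T_2)$ and $(T_2,T_1)$: both products are Fredholm (so densely defined, closed, with closed range), and
\begin{equation*}
\ind(T_2 T_1) = \ind(T_1) + \ind(T_2) = \ind(T_1 T_2).
\end{equation*}
Because the products have trivial kernel and closed range, fact~$(6)$ gives $\ind(T_2 T_1) = -\defe(T_2 T_1,0)$ and $\ind(T_1 T_2) = -\defe(T_1 T_2,0)$; substituting $\ind(T_j) = -\defe(T_j,0)$ and negating produces the claimed identity $\defe(T_1 T_2,0) = \defe(T_1,0) + \defe(T_2,0) = \defe(T_2 T_1,0)$, which lies in $\bbN_0$ since all summands are finite.

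Part~$(ii)$ runs along the same lines but invokes Theorem~\ref{t2.4}$(ii)$ in place of part~$(i)$: with $T_1,T_2$ left semi-Fredholm and $T_1 T_2$ densely defined (the standing assumption), that result renders $T_1 T_2$ left semi-Fredholm, hence closed with closed range, and $\ind(T_1 T_2) = \ind(T_2) + \ind(T_1)$. Reading $\ind(T_1 T_2) = -\defe(T_1 T_2,0)$ off the trivial kernel and substituting again delivers $\defe(T_1 T_2,0) = \defe(T_1,0) + \defe(T_2,0)$, now interpreted in $\bbN_0 \cup \{\infty\}$: should either factor have infinite defect, both sides are $+\infty$ and the underlying index identity reads $-\infty = -\infty$. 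The step I expect to be the genuine obstacle is precisely the density of $\dom(T_1 T_2)$ in the semi-Fredholm regime, which is why it must appear as a hypothesis rather than a conclusion; as Remark~\ref{r2.5} stresses, it is the one ingredient of the product formula that does not transfer automatically from bounded to unbounded operators. A secondary bookkeeping point is to apply Theorem~\ref{t2.4}, stated for $T_2 T_1$, with the factors interchanged in order to reach $T_1 T_2$.
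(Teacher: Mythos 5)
Your proposal is correct and follows essentially the same route as the paper's own proof: composing the lower bounds from $0 \in \hatt\rho(T_j)$ to get $0 \in \hatt\rho(T_1T_2) \cap \hatt\rho(T_2T_1)$, observing that all kernels are then trivial so every index equals minus the corresponding defect number, and invoking Theorem \ref{t2.4}\,$(i)$, resp.\ $(ii)$, to convert the index product formula into the defect-number identity. Your explicit appeal to Lemma \ref{l2.2} for closed range and your remark on the order of the factors are minor bookkeeping refinements the paper leaves implicit.
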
 
%%%%%%%
\begin{proof}
Since $0 \in \hatt \rho(T_j)$, $j=1,2$, one necessarily has \eqref{2.33} as 
\begin{equation}
\|T_2T_1 f\|_{\cH} \geq c_{T_2,0} \|T_1 f\|_{\cH} \geq c_{T_2,0} c_{T_1,0} \|f\|_{\cH}, \quad f \in \dom (T_2T_1) \subseteq \dom(T_1).
\end{equation} 
Analogously,
\begin{equation}
\|T_1T_2 f\|_{\cH} \geq c_{T_1,0} \|T_2 f\|_{\cH} \geq c_{T_1,0} c_{T_2,0} \|f\|_{\cH}, \quad f \in \dom (T_1T_2) \subseteq \dom(T_2).
\end{equation} 
Moreover, one has
\begin{equation}
0 \in \hatt \rho(T_1) \cap \hatt \rho(T_2) \cap \hatt \rho(T_2T_1) \cap \hatt \rho(T_1T_2), 
\end{equation}
and hence,
\begin{equation}
\ker(T_1) = \ker(T_2) = \ker(T_2T_1) = \ker(T_1T_2) = \{0\}.  
\end{equation}
$(i)$ Since by hypothesis $\defe(T_j,0) = \dim(\ker(T_j^*)) < \infty$, this implies that $T_j$, $j=1,2$, are Fredholm and hence $T_2 T_!$ and $T_1 T_2$ are densely defined, closed, and Fredholm, and by Theorem \ref{t2.4}\,$(i)$, 
\begin{align}
& \ind(T_2 T_1) = - \dim(\ker((T_2 T_1)^*))    \no \\
& \quad = - \defe(T_2 T_1,0) = \ind(T_1) + \ind(T_2) = -  \dim(\ker( T_1^*)) -  \dim(\ker((T_2^*))    \no \\
& \quad = - \defe(T_1,0) - \defe(T_2,0),   \lb{2.41} \\
& \ind(T_1 T_2) = - \dim(\ker((T_1 T_2)^*))   \no  \\
& \quad = - \defe(T_1 T_2,0) = \ind(T_1) + \ind(T_2) = -  \dim(\ker( T_1^*)) -  \dim(\ker((T_2^*))   \no \\
& \quad = - \defe(T_1,0) - \defe(T_2,0). 
\end{align}
$(ii)$ Since $\ker(T_1) = \ker(T_2) = \{0\}$, $T_j$, $j=1,2$, are left semi-Fredholm. Together with the hypothesis that $T_2 T_1$ is densely defined, this yields that $T_2 T_!$ is closed and left semi-Fredholm, and hence Theorem \ref{t2.4}\,$(ii)$ applies to yield \eqref{2.41} again.
\end{proof}
%%%%%%%

%%%%%%%
\begin{remark} \lb{r2.7} 
Clearly, iterations in Theorems \ref{t2.4} and \ref{t2.6} yield analogous results for any finite product of appropriate closed operators. \hfill $\diamond$
\end{remark} 
%%%%%%%

We conclude this section with some applications to powers of densely defined, closed, symmetric operators $S$ in $\cH$. We start with even powers:

%%%%%%%
\begin{theorem} \lb{t2.8}
Let $S$ be a densely defined, closed, symmetric operator in $\cH$ with deficiency indices $n_{\pm}(S)$ given by
\begin{align}
\begin{split}
n_{\pm}(S) &= \defe(S,\mp i) =  \defe(S \pm i I_{\cH},0) = \dim\big((\ran(S \pm i I_{\cH}))^{\bot}\big)    \\
&= \dim(\ker(S^* \mp i I_{\cH})) \in \bbN \cup \{+\infty\}.
\end{split}
\end{align}
$(i)$ Suppose that $n_{\pm}(S) \in \bbN_0$. Then $S^{2k}$, $k \in \bbN$, is densely defined, symmetric, and closed,  
\begin{equation}
0 \in \hatt \rho\big(S^{2k} + I_{\cH}\big), \quad k \in \bbN, 
\end{equation}
and 
\begin{equation}
n_+\big(S^{2k}\big) = n_-\big(S^{2k}\big) = k [n_+(S) + n_-(S)] \in \bbN, \quad k \in \bbN.     \lb{2.44}
\end{equation}
$(ii)$ Suppose that $n_{\pm}(S) \in \bbN_0 \cup \{+\infty\}$ and that $S^{2\ell}$, $\ell \in \{1,\dots,k\}$, are densely defined. Then $S^{2k}$, $k \in \bbN$, is symmetric and closed, 
\begin{equation}
0 \in \hatt \rho\big(S^{2k} + I_{\cH}\big), \quad k \in \bbN, 
\end{equation} 
and 
\begin{equation}
n_+\big(S^{2k}\big) = n_-\big(S^{2k}\big) = k [n_+(S) + n_-(S)] \in \bbN \cup \{+\infty\}, \quad k \in \bbN.     \lb{2.47}
\end{equation}
\end{theorem}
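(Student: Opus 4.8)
The plan is to realize $S^{2k}+I_{\cH}$ as a product of $2k$ linear factors in $S$ and then invoke the product formula for defect numbers, Theorem \ref{t2.6}, iterated as in Remark \ref{r2.7}. Let $z_1,\dots,z_{2k}$ be the $2k$-th roots of $-1$, so that $\prod_{j=1}^{2k}(x-z_j)=x^{2k}+1$; since $x^{2k}+1$ has real coefficients and no real zeros, the $z_j$ occur in conjugate pairs, exactly $k$ of them lying in $\bbC_+$ and $k$ in $\bbC_-$. Because each factor $S-z_j I_{\cH}$ has domain $\dom(S)$, a routine computation of composition domains gives $\dom\big(\prod_{j=1}^m(S-z_j I_{\cH})\big)=\dom(S^m)$, so that, as an operator identity including domains,
\[
\prod_{j=1}^{2k}(S-z_j I_{\cH})=S^{2k}+I_{\cH}.
\]

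First I would record that each factor $S-z_jI_{\cH}$ is densely defined and closed (as $S$ is), has $0\in\hatt\rho(S-z_jI_{\cH})$ because $z_j\in\bbC\backslash\bbR\subseteq\hatt\rho(S)$ by item $(9)(b)$, and satisfies
\[
\defe(S-z_jI_{\cH},0)=\defe(S,z_j)=\begin{cases} n_-(S),& z_j\in\bbC_+,\\ n_+(S),& z_j\in\bbC_-,\end{cases}
\]
where the two values follow from the constancy of $\defe(S,\dott)$ on the connected components $\bbC_{\pm}$ of $\bbC\backslash\bbR$ (item $(3)(a)$) together with the reference values $\defe(S,\mp i)=n_{\pm}(S)$. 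In case $(i)$ these numbers are finite, so Theorem \ref{t2.6}\,$(i)$ applies to each successive pair and, iterating, shows that the partial products $P_m=\prod_{j=1}^m(S-z_jI_{\cH})$ are densely defined, closed, and Fredholm with $0\in\hatt\rho(P_m)$ (the lower bound being the product of the individual lower bounds); at the final stage this yields that $S^{2k}+I_{\cH}$, and hence $S^{2k}$, is densely defined and closed, together with
\[
\defe\big(S^{2k}+I_{\cH},0\big)=\sum_{j=1}^{2k}\defe(S-z_jI_{\cH},0)=k\,n_+(S)+k\,n_-(S).
\]
In case $(ii)$ the density of $\dom(S^{2k})$ is assumed, whence all lower powers $\dom(S^m)\supseteq\dom(S^{2k})$ are dense and each partial product is densely defined; since each $S-z_jI_{\cH}$ has trivial kernel it is left semi-Fredholm, and Theorem \ref{t2.6}\,$(ii)$ applies iteratively to give the same additive formula, now valued in $\bbN_0\cup\{\infty\}$.

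It then remains to pass from $\defe(S^{2k}+I_{\cH},0)$ to the deficiency indices $n_{\pm}(S^{2k})=\defe(S^{2k},\mp i)$. For this I would verify that $S^{2k}$ is symmetric, via $(S^{2k}f,g)_{\cH}=(S^kf,S^kg)_{\cH}=(f,S^{2k}g)_{\cH}$ for $f,g\in\dom(S^{2k})\subseteq\dom(S^k)$, and nonnegative, via $(S^{2k}f,f)_{\cH}=\|S^kf\|_{\cH}^2\geq0$. Nonnegativity gives $S^{2k}\geq0$, hence $(-\infty,0)\subset\hatt\rho(S^{2k})$ by item $(9)(d)$; in particular $-1\in\hatt\rho(S^{2k})$, i.e.\ $0\in\hatt\rho(S^{2k}+I_{\cH})$, and, since $\hatt\rho(S^{2k})$ now contains a real point, it is connected with $\defe(S^{2k},\dott)$ constant on it by item $(9)(e)$. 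Consequently
\[
n_+\big(S^{2k}\big)=n_-\big(S^{2k}\big)=\defe\big(S^{2k},-1\big)=\defe\big(S^{2k}+I_{\cH},0\big)=k[n_+(S)+n_-(S)],
\]
which is the asserted formula.

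The main obstacle I anticipate is bookkeeping around domains and closedness rather than any conceptual difficulty: one must check that the iterated composition reproduces $S^{2k}+I_{\cH}$ exactly, domain included, and that the hypotheses of Theorem \ref{t2.6} hold at every stage. In case $(i)$ closedness, density, and finiteness of the defect are handed back by the theorem itself, so the iteration is fully self-starting; in case $(ii)$ one must separately confirm that each partial product is densely defined, and this is precisely where the standing assumption that $S^{2k}$ (equivalently all $S^{2\ell}$, $\ell\leq k$) be densely defined is used.
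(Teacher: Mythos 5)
Your proposal is correct and follows essentially the same route as the paper: factor $S^{2k}+I_{\cH}$ into the $2k$ linear factors $S-z_jI_{\cH}$ with the $z_j$ the $2k$-th roots of $-1$, iterate Theorem \ref{t2.6} (part $(i)$ or $(ii)$ as appropriate) to obtain $\defe\big(S^{2k}+I_{\cH},0\big)=k[n_+(S)+n_-(S)]$ by counting roots in $\bbC_{\pm}$, and then identify this with $n_{\pm}\big(S^{2k}\big)$. Your final step is in fact slightly more explicit than the paper's (which simply asserts that $S^{2k}+I_{\cH}$ and $S^{2k}$ have the same deficiency indices), since you justify it via nonnegativity of $S^{2k}$, item $(9)(d)$--$(e)$, and the constancy of $\defe\big(S^{2k},\dott\big)$ on the resulting connected field of regularity.
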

%%%%%%%
\begin{proof}
Let $k \in \bbN$ throughout this proof. Consider $S^{2k} + I_{\cH}$, and factor it into
\begin{equation}
S^{2k} + I_{\cH} = \prod_{j=1}^{2k} (S - \omega_j), \quad \omega_j = e^{(2j-1)i\pi/(2k)}, \; j \in \{1,2,\dots,2k\}.
\end{equation}
Then $0 \in \hatt \rho\big(S^{2k} + I_{\cH}\big)$ and hence 
\begin{equation}
n_+\big(S^{2k} + I_{\cH}\big) = n_-\big(S^{2k} + I_{\cH}\big), \quad k \in \bbN.     
\end{equation} 

 To prove item $(i)$, one repeatedly applies Theorem \ref{t2.6}\,$(i)$ to obtain 
 \begin{equation}
 n_{\pm}\big(S^{2k} + I_{\cH}\big) = \defe\big(S^{2k} + I_{\cH},0\big) = \sum_{j=1}^{2k} \defe(S - \omega_j,0) = k[n_+(S) + n_-(S)], 
 \end{equation} 
 since by symmetry, $k$ of the $\omega_j$ lie in the open upper complex half-plane $\bbC_+$, contributing the term $k n_+(S)$, and $k$ of the $\omega_j$ lie in the open lower complex half-plane $\bbC_-$, contributing the term $k n_-(S)$.   Since $S^{2k} + I_{\cH}$ and $S^{2k}$ have the same deficiency indices, one obtains \eqref{2.44}. 
 
The proof of item $(ii)$ then follows in the same manner upon repeatedly applying Theorem \ref{t2.6}\,$(ii)$.
\end{proof}
%%%%%%%

The case $k=1$ in Theorem \ref{t2.8}, assuming $n_{\pm}(S) \in \bbN$, is due to Glazman \cite[Theorem~22, p.~26]{Gl65}.

Next, we treat the case of odd powers of $S$:

%%%%%%%
\begin{theorem} \lb{t2.9}
Let $S$ be a densely defined, closed, symmetric operator in $\cH$ with deficiency indices $n_{\pm}(S)$ given by
\begin{align}
\begin{split}
n_{\pm}(S) &= \defe(S,\mp i) =  \defe(S \pm i I_{\cH},0) = \dim\big((\ran(S \pm i I_{\cH}))^{\bot}\big)    \\
&= \dim(\ker(S^* \mp i I_{\cH})) \in \bbN_0 \cup \{+\infty\}.
\end{split}
\end{align}
$(i)$ Suppose that $n_{\pm}(S) \in \bbN_0$. Then $S^{2k+1}$, $k \in \bbN$, is densely defined, symmetric, and closed,  
\begin{equation}
0 \in \hatt \rho\big(S^{2k+1} \pm i I_{\cH}\big), \quad k \in \bbN,
\end{equation}
and 
\begin{align}
& n_+\big(S^{2k+1}\big) = k [n_+(S) + n_-(S)] + n_+(S) \in \bbN_0, \quad k \in \bbN.     \lb{2.53} \\
& n_-\big(S^{2k+1}\big) = k [n_+(S) + n_-(S)] + n_-(S) \in \bbN_0, \quad k \in \bbN. 
\end{align}
$(ii)$ Suppose that $n_{\pm}(S) \in \bbN_0 \cup \{+\infty\}$ and that $S^{2\ell+1}$, $\ell \in \{1,\dots,k\}$, are densely defined. Then $S^{2k+1}$, $k \in \bbN$, is symmetric and closed, 
\begin{equation}
0 \in \hatt \rho\big(S^{2k+1} \pm i I_{\cH}\big), \quad k \in \bbN, 
\end{equation} 
and 
\begin{align}
& n_+\big(S^{2k+1}\big) = k [n_+(S) + n_-(S)] + n_+(S) \in \bbN_0 \cup \{+\infty\}, \quad k \in \bbN.     \lb{2.56} \\
& n_-\big(S^{2k+1}\big) = k [n_+(S) + n_-(S)] + n_-(S) \in \bbN_0 \cup \{+\infty\}, \quad k \in \bbN. 
\end{align}
\end{theorem}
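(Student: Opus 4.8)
The plan is to imitate the proof of Theorem~\ref{t2.8}, but to factor $S^{2k+1}+iI_\cH$ and $S^{2k+1}-iI_\cH$ in place of $S^{2k}+I_\cH$, while keeping careful track of which open half-plane each linear factor sits in. The starting observation is that, by item~$(6)$,
\begin{align*}
n_+\big(S^{2k+1}\big) &= \defe\big(S^{2k+1}+iI_\cH,0\big) = \dim\big(\ker\big((S^{2k+1})^* - iI_\cH\big)\big), \\
n_-\big(S^{2k+1}\big) &= \defe\big(S^{2k+1}-iI_\cH,0\big) = \dim\big(\ker\big((S^{2k+1})^* + iI_\cH\big)\big),
\end{align*}
so that, in contrast to the even case, no auxiliary shift is needed: the two numbers I must compute are exactly the defect numbers at $0$ of the operators I intend to factor. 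Writing $z_1,\dots,z_{2k+1}$ for the $(2k+1)$-st roots of $-i$ and $\ol{z_1},\dots,\ol{z_{2k+1}}$ for those of $i$, I would use, on $\dom\big(S^{2k+1}\big)$, the factorizations
\[
S^{2k+1}+iI_\cH = \prod_{j=1}^{2k+1}(S-z_j), \qquad S^{2k+1}-iI_\cH = \prod_{j=1}^{2k+1}(S-\ol{z_j}),
\]
the order of the mutually commuting factors being irrelevant.

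First I would verify the hypotheses needed to feed these products into Theorem~\ref{t2.6}. Since $z^{2k+1}=\mp i$ has no real solutions, each $z_j,\ol{z_j}\in\bbC\backslash\bbR\subseteq\hatt\rho(S)$ by item~$(9)(b)$, so that $0\in\hatt\rho(S-z_j)\cap\hatt\rho(S-\ol{z_j})$ and every linear factor is densely defined, closed, with trivial kernel. Multiplying the lower bounds $\|(S-z_j)f\|_\cH\geq c_j\|f\|_\cH$ as in the proof of Theorem~\ref{t2.6} yields $0\in\hatt\rho\big(S^{2k+1}\pm iI_\cH\big)$. For item~$(i)$, finiteness of $n_\pm(S)$ makes every factor Fredholm, so iterating Theorem~\ref{t2.6}\,$(i)$ shows the full products are Fredholm, whence $S^{2k+1}\pm iI_\cH$ and therefore $S^{2k+1}$ are closed, and the defect numbers add. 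For item~$(ii)$ I would iterate Theorem~\ref{t2.6}\,$(ii)$ instead, noting that each partial product $\prod_{j=1}^{m}(S-z_j)$ has domain $\dom\big(S^m\big)$; since $\dom\big(S^{2k+1}\big)\subseteq\dom\big(S^m\big)$ for $m\leq 2k+1$, the density of $\dom\big(S^{2k+1}\big)$ assumed in the hypothesis forces density of all these partial products, so the semi-Fredholm product formula applies at every stage. In both cases $S^{2k+1}$ is symmetric because $S$ is and $\dom\big(S^{2k+1}\big)$ is dense. Altogether this gives $\defe\big(S^{2k+1}\pm iI_\cH,0\big)=\sum_{j=1}^{2k+1}\defe(S-z_j^\pm,0)$, with $z_j^+=z_j$ and $z_j^-=\ol{z_j}$.

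The genuine obstacle, and the one place where the odd case differs essentially from the even one, is the half-plane bookkeeping. For $\omega\in\bbC\backslash\bbR$ the quantity $\defe(S-\omega,0)=\defe(S,\omega)=\dim(\ker(S^*-\ol{\omega}I_\cH))$ is constant on each of $\bbC_+$ and $\bbC_-$ by items~$(3)(a)$ and~$(9)$; but because of the conjugate $\ol{\omega}$ it equals $n_-(S)=\defe(S,i)$ for $\omega\in\bbC_+$ and $n_+(S)=\defe(S,-i)$ for $\omega\in\bbC_-$. Thus a factor $S-\omega$ with $\omega\in\bbC_+$ contributes $n_-(S)$, and one with $\omega\in\bbC_-$ contributes $n_+(S)$ --- a swap that is invisible in Theorem~\ref{t2.8}, where the even total is symmetric, but that is decisive here. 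Inspecting the arguments of the roots $z_j$ shows that exactly $k$ of the roots of $z^{2k+1}+i$ lie in $\bbC_+$ and $k+1$ lie in $\bbC_-$, while conjugation reverses these counts for the roots of $z^{2k+1}-i$. Combining,
\begin{align*}
n_+\big(S^{2k+1}\big) &= k\,n_-(S) + (k+1)\,n_+(S) = k[n_+(S)+n_-(S)] + n_+(S), \\
n_-\big(S^{2k+1}\big) &= (k+1)\,n_-(S) + k\,n_+(S) = k[n_+(S)+n_-(S)] + n_-(S),
\end{align*}
which are the asserted identities \eqref{2.53} and \eqref{2.56} together with their unlabeled counterparts; membership in $\bbN_0$ (resp.\ $\bbN_0\cup\{+\infty\}$) is then immediate from the hypothesis imposed on $n_\pm(S)$.
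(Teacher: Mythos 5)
Your proof is correct and follows essentially the same route as the paper's: factor $S^{2k+1}\pm iI_{\cH}$ into linear factors $S-\omega$ with $\omega^{2k+1}=\mp i$, verify $0\in\hatt\rho\big(S^{2k+1}\pm iI_{\cH}\big)$, and iterate Theorem \ref{t2.6} while counting roots per half-plane. Your bookkeeping --- a factor $S-\omega$ with $\omega\in\bbC_{\pm}$ contributes $n_{\mp}(S)$ because $\defe(S,\omega)=\dim\big(\ker\big(S^*-\ol{\omega}I_{\cH}\big)\big)$, and $z^{2k+1}=-i$ has $k$ roots in $\bbC_+$ and $k+1$ in $\bbC_-$ --- is in fact the careful version of the paper's count, which states the two half-plane roles interchanged; the two swaps cancel, so both arguments land on the same correct formula.
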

%%%%%%%
\begin{proof}
Let $k \in \bbN$ throughout this proof. Consider $S^{2k+1} \pm i I_{\cH}$, and factor it into
\begin{align}
& S^{2k+1} \pm i I_{\cH} = \prod_{j=1}^{2k+1} (S - \omega_{\pm,j}), \\
\begin{split}
& \omega_{+,j} = e^{3 \pi i/[4k+2]} \omega_{2k+1}^{j-1}, \quad \omega_{-,j} = e^{\pi i/[4k+2]} \omega_{2k+1}^{j-1}, 
\quad j \in \{1,\dots,2k\},     \\
& \omega_{2k+1} = e^{2\pi i/[2k+1]}, \; k \in \bbN. 
\end{split}
\end{align}
Since
\begin{align}
& \big\|\big(S^{2k+1} \pm i I_{\cH}\big)f\big\|^2_{\cH} = \big(|\big(S^{2k+1} \pm i I_{\cH}\big)f, |\big(S^{2k+1} \pm i I_{\cH}\big)f\big)_{\cH}     \no \\
& \quad = \big\|S^{2k+1}f\big\|^2_{\cH} \pm i \big(S^{2k+1}f,f\big)_{\cH} \mp i \big(f,S^{2k+1}f\big)_{cH} + \|f\|^2_{\cH} 
\no \\
& \quad = \big\|S^{2k+1}f\big\|^2_{\cH} \pm i \big(f, (S^*)^{2k+1}f\big)_{\cH} \mp i \big(f,S^{2k+1}f\big)_{cH} + \|f\|^2_{\cH} 
\no \\
& \quad = \big\|S^{2k+1}f\big\|^2_{\cH} \pm i \big(f, S^{2k+1}f\big)_{\cH} \mp i \big(f,S^{2k+1}f\big)_{cH} + \|f\|^2_{\cH} 
\no \\
& \quad = \big\|S^{2k+1}f\big\|^2_{\cH} + \|f\|^2_{\cH} \geq \|f\|^2_{\cH}, \quad f \in \dom\big(S^{2k+1}\big) 
\subseteq \dom\big((S^*)^{2k+1}\big),
\end{align}
one obtains 
\begin{equation}
0 \in \hatt \rho\big(S^{2k+1} \pm i I_{\cH}\big).
\end{equation}  

To prove item $(i)$, one repeatedly applies Theorem \ref{t2.6}\,$(i)$ to obtain 
 \begin{align}
 \begin{split}
 n_{\pm}\big(S^{2k+1}\big) &= \defe\big(S^{2k+1} \pm i I_{\cH},0\big) = \sum_{j=1}^{2k+1} \defe(S - \omega_{\pm,j},0) \\
 &= k[n_+(S) + n_-(S)] + n_{\pm}(S), 
 \end{split} 
 \end{align} 
 since $k+1$ of the $\omega_{+,j}$ lie in the open upper complex half-plane $\bbC_+$, contributing the term $(k+1) n_+(S)$, and $k$ of the $\omega_{+,j}$ lie in the open lower complex half-plane $\bbC_-$, contributing the term $k n_-(S)$.   
Similarly, $k+1$ of the $\omega_{-,j}$ lie in the open lower complex half-plane $\bbC_-$, contributing the term $(k+1) n_-(S)$, and $k$ of the $\omega_{-,j}$ lie in the open upper complex half-plane $\bbC_+$, contributing the term $k n_+(S)$.    
 
The proof of item $(ii)$ then follows in the same manner upon repeatedly applying Theorem \ref{t2.6}\,$(ii)$.
\end{proof}
%%%%%%%

Theorems \ref{t2.8} and \ref{t2.9} are due to Behncke and Focke \cite{BF77}. Their proof is rather succinct so we decided to provide the arguments based on semi-Fredholm operator techniques in detail. 

In particular, in the special case of equal deficiency indices, $n_+(S) = n_-(S) = n(S)$, then (under the hypotheses in Theorems \ref{t2.8} and \ref{t2.9}) 
\begin{equation}
n_{\pm}\big(S^m\big) = m \, n(S), \quad m \in \bbN,    \lb{2.61}
\end{equation}
also recovering a result of Glazman \cite[p.~27]{Gl65}.

In connection with the denseness hypothesis in Theorems \ref{t2.8}\,$(ii)$ and \ref{t2.9}\,$(ii)$ we recall the following result by Schm\"{u}dgen:   

%%%%%%%
\begin{theorem} $($\cite[Theorem~1.9]{Sc83}$)$.  \lb{l2.3} ${}$\\
Assume $T$ is densely defined, closed, and symmetric in $\cH$, and suppose that at least one of its deficiency indices is finite. Then $\bigcap_{k \in \bbN} \dom\big(T^k\big)$ is a core for each $T^n$, $n \in \bbN$. In particular, $\bigcap_{k \in \bbN} \dom\big(T^k\big)$ $\big($and hence, $\dom\big(T^k\big)$$\big)$, $k \in \bbN$, is dense in $\cH$.
\end{theorem}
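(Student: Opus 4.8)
The plan is to reduce the statement to the case of a \emph{maximal} symmetric operator, where it follows from one-parameter semigroup theory, and then to descend back to $T$ through a finite-dimensional correction. Write $\cD_\infty := \bigcap_{k\in\bbN}\dom\big(T^k\big)$. Since $T\subseteq T^*$ forces $T^n\subseteq(T^n)^*$, every $T^n$ is symmetric, hence closable, and I read the assertion as $\ol{T^n\big|_{\cD_\infty}} = \ol{T^n}$, i.e.\ that $\cD_\infty$ is dense in $\dom\big(\ol{T^n}\big)$ with respect to the graph norm $f\mapsto \big(\|f\|_\cH^2 + \big\|\ol{T^n}f\big\|_\cH^2\big)^{1/2}$. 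Replacing $T$ by $-T$ if necessary (this interchanges $n_+(T)$ and $n_-(T)$ and does not change $\cD_\infty$), I may assume $n_-(T)<\infty$.

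First I would pass to a maximal symmetric extension. Since $\min(n_+(T),n_-(T))\le n_-(T)<\infty$, von Neumann's theory yields a maximal symmetric $\wti T\supseteq T$ with $\dom\big(\wti T\big)=\dom(T)\dotplus F$, $\dim F<\infty$, one of whose deficiency indices vanishes; both cases being analogous, say $n_-\big(\wti T\big)=0$. Then $\bbC_+\subseteq\rho\big(\wti T\big)$, and the elementary resolvent bound $\big\|\big(\wti T-zI_\cH\big)^{-1}\big\|_{\cB(\cH)}\le|\Im(z)|^{-1}$ shows, via Hille--Yosida, that $-i\wti T$ generates a $C_0$-semigroup of contractions $\{S(t)\}_{t\ge0}$. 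For a semigroup generator the assertion is classical: the smoothing (G\aa rding) vectors $f_\phi=\int_0^\infty\phi(t)S(t)f\,dt$ with $\phi\in C_0^\infty((0,\infty))$ satisfy $\wti T^{\,n}f_\phi=(-1)^n\int_0^\infty\phi^{(n)}(t)S(t)f\,dt$, hence lie in $\bigcap_k\dom\big(\wti T^{\,k}\big)$; they are dense in $\cH$ and, for $f\in\dom\big(\wti T^{\,n}\big)$, converge to $f$ together with $\wti T^{\,n}f_\phi\to\wti T^{\,n}f$, so they form a core for each (closed) power $\wti T^{\,n}$.

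It remains to descend from $\wti T$ to $T$. The structural input that makes this plausible is that the domains differ only finitely at every level: a routine induction on $n$ (comparing $f\mapsto[f]\in\dom\big(\wti T\big)/\dom(T)$ with $f\mapsto[\wti T f]\in\dom\big(\wti T^{\,n-1}\big)/\dom\big(T^{n-1}\big)$) gives $\dim\big(\dom\big(\wti T^{\,n}\big)/\dom\big(T^{n}\big)\big)\le n\dim F<\infty$. Given $f\in\dom\big(\ol{T^n}\big)\subseteq\dom\big(\wti T^{\,n}\big)$, the vectors $g_j\in\bigcap_k\dom\big(\wti T^{\,k}\big)$ produced above approximate $f$ in the $\wti T^{\,n}$-graph norm; but they are only $\wti T$-smooth, and in general $g_j\notin\cD_\infty$. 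The main obstacle --- and the only genuinely delicate step --- is to correct each $g_j$ by some $h_j\in\bigcap_k\dom\big(\wti T^{\,k}\big)$ that is negligible in the $\wti T^{\,n}$-graph norm and satisfies $g_j-h_j\in\dom\big(T^{k}\big)$ for \emph{all} $k$ simultaneously, so that $g_j-h_j\in\cD_\infty$. Each individual obstruction lives in the finite-dimensional quotient $\dom\big(\wti T^{\,k}\big)/\dom\big(T^{k}\big)$, but these must be annihilated coherently across all $k$ at once while controlling the graph norm; I would attempt this by an annihilator/duality argument carried out in the graph Hilbert space of $\ol{T^n}$, using $\ol{T^n}\subseteq\wti T^{\,n}$. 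Once the correction is in place, the core property for each $T^n$ follows, and density of $\cD_\infty$ in $\cH$ is the special case $n=0$.
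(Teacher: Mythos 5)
First, a point of comparison: the paper does not prove this statement at all --- it is quoted directly from Schm\"udgen \cite[Theorem~1.9]{Sc83} --- so there is no in-paper proof to measure your argument against. Judged on its own terms, your proposal is an honest sketch whose first half is correct and classical but whose second half, where the entire content of the theorem lives, is not carried out. The reduction to a maximal symmetric extension $\wti T\supseteq T$ with $\dom\big(\wti T\big)=\dom(T)\dotplus F$, $\dim F=\min(n_+(T),n_-(T))<\infty$, the Hille--Yosida/G\aa rding-vector argument showing that $\bigcap_{k}\dom\big(\wti T^{\,k}\big)$ is a core for each $\wti T^{\,n}$, and the codimension estimate $\dim\big(\dom\big(\wti T^{\,n}\big)/\dom(T^n)\big)\le n\dim F$ are all fine (modulo the harmless factor $i^n$ relating $\wti T^{\,n}$ to the $n$-th power of the generator $-i\wti T$).

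The genuine gap is the descent step, which you yourself flag as ``the only genuinely delicate step'' and then do not perform. Two things block the proposed repair. First, the constraints on the correction $h_j$, namely $g_j-h_j\in\dom\big(T^{k}\big)$ for \emph{every} $k\in\bbN$, are infinitely many finite-codimension conditions, and for $k>n$ they involve $\wti T^{\,k}(g_j-h_j)$; such conditions are not continuous in the graph norm of $\ol{T^n}$, so an annihilator/duality argument ``carried out in the graph Hilbert space of $\ol{T^n}$'' cannot even detect them, let alone annihilate them coherently across all $k$ while keeping $\|h_j\|_{\cH}+\big\|\wti T^{\,n}h_j\big\|_{\cH}$ small. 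Second, and more fundamentally, the existence of such corrections presupposes that $\cD_\infty=\bigcap_k\dom\big(T^k\big)$ already meets every $\wti T^{\,n}$-graph-norm neighborhood of every $g_j$ --- which is essentially the conclusion to be proved; a priori you have not even shown $\cD_\infty\neq\{0\}$. This is precisely where the finiteness hypothesis must enter in an essential, quantitative way: without it there exist closed symmetric operators with $\dom\big(T^2\big)=\{0\}$ (see, e.g., \cite{BN93}, \cite{Ar23}), so no soft duality argument can close this step. As written, the proposal proves the theorem for maximal symmetric operators but not for $T$.
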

%%%%%%%

Next, we re-derive the fact that the deficiency indices of a real polynomial $P_m$ of degree $m \in \bbN$ of a given symmetric operator $S$ of the form
\begin{equation}
P_m(S) = a_m S^m+a_{m-1}S^{n-1}+\dots+a_1S+a_0, \quad a_m > 0, \, a_j \in \bbR, \; 0 \leq j \leq m-1,    \lb{2.64} 
\end{equation}
satisfy 
\begin{equation}
n_{\pm}(P_m(S))=n_\pm(S^m). 
\end{equation}
This is a well-known result, see, for instance, \cite[Thm.~1 and p. 126]{BF77}, and can be shown using the functional analytic fact that the lower order terms in $P_m$ are relatively bounded with respect to $S^m$ with relative bound equal to zero (i.e., they are infinitesimally bounded w.r.t. $S^m$). In what follows, we provide a different, complex analytic proof of this fact by counting the numbers of roots of $P_m(t)\pm i\varepsilon$ that lie in $\bbC_+$ and $\bbC_-$ for $\varepsilon>0$ sufficiently small and then invoke Theorem \ref{t2.6}\,$(ii)$.

The following Lemma \ref{l2.11} is known, in fact, a more general version, the Cauchy Index Theorem, was formulated by Hurwitz \cite{Hu95} and is recorded, for instance, in \cite[Thm.~(37,1), p. 169--170]{Ma89} (see also \cite[Sect.~11.3]{RS02}). For convenience of the reader we present the following elementary proof that was kindly communicated to us by Andrei Martinez-Finkelshtein: 

%%%%%%
\begin{lemma} \lb{l2.11}
Assume that $P_m(z)$, $z \in \bbC$, is a real-polynomial of degree $m \in \bbN$ with positive highest coefficient $a_m > 0$ in \eqref{2.64}, and with $m$ simple roots. In addition, introduce 
$P_{m, \varepsilon}^\pm (z) = P_m(z)\mp i\varepsilon$, $z \in \bbC$, $\varepsilon \in (0,\infty)$, and let $k \in \bbN$. Then the following items $(i)$--$(iii)$ hold for $0 < \varepsilon$ sufficiently small\,$:$ \\[1mm] 
$(i)$ If $m=2k$, then the polynomials $P_{m,\varepsilon}^\pm$ have exactly $k$ simple roots in $\bbC_+$ and $k$ simple roots in $\bbC_-$. \\[1mm]
$(ii)$ If $m=2k-1$, then the polynomial $P_{m,\varepsilon}^+(t)$ has exactly $k$ simple roots in $\bbC_+$ and $(k-1)$ simple roots in $\bbC_-$. \\[1mm] 
$(iii)$ If $m=2k-1$, then the polynomial $P_{m,\varepsilon}^-$ has exactly $(k-1)$ simple roots in $\bbC_+$ and $k$ simple roots in $\bbC_-$. \\[1mm]
In addition, one obtains the following result global in $\varepsilon > 0$\,$:$ \\[1mm]
$(iv)$ For all $\varepsilon \in (0,\infty)$, the roots of $P_{m,\varepsilon}^\pm$ remain in the open half-plane $\bbC_+$, respectively, $\bbC_-$, they originally entered for $0 < \varepsilon$ sufficiently small; equivalently, they cannot change half-planes as $\varepsilon$ runs through the interval $(0,\infty)$. $($The roots are not necessarily simple, in general.$)$
\end{lemma}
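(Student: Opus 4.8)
The plan is to establish the local statements (i)--(iii), valid for small $\varepsilon$, by a first-order perturbation analysis of the simple roots of $P_m$, and then to upgrade to the global statement (iv) by the elementary observation that $P_{m,\varepsilon}^\pm$ never has a real root once $\varepsilon>0$. Throughout, the reality of the coefficients of $P_m$ will do most of the bookkeeping.

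First I would write the $m$ simple roots of $P_m$ as $p$ complex-conjugate pairs ($p$ in $\bbC_+$ and $p$ in $\bbC_-$) together with $s=m-2p$ simple real roots $r_1<\dots<r_s$. Since the roots are simple, $P_m'$ does not vanish at any of them, so for $\varepsilon$ small each of the $m$ roots of $P_{m,\varepsilon}^\pm(z)=P_m(z)\mp i\varepsilon$ is a small perturbation of exactly one root of $P_m$ (solving $P_m(z)=\pm i\varepsilon$ to first order gives $z-z_0\approx \pm i\varepsilon/P_m'(z_0)$), and these $m$ perturbed roots remain mutually distinct, hence simple, for $0<\varepsilon$ small. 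A non-real root $z_0$ of $P_m$ has $\Im(z_0)\neq 0$, so its perturbation stays in the same open half-plane, contributing $p$ roots to each of $\bbC_\pm$. For a real root $r_j$ one has $P_m'(r_j)\in\bbR\setminus\{0\}$, so for $P_{m,\varepsilon}^+$ the perturbed root satisfies $\Im\approx \varepsilon/P_m'(r_j)$, i.e.\ it enters $\bbC_+$ iff $P_m'(r_j)>0$ and $\bbC_-$ iff $P_m'(r_j)<0$.

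The key combinatorial step is that $\sign P_m'(r_j)$ alternates along $r_1<\dots<r_s$, with $P_m'(r_s)>0$: the hypothesis $a_m>0$ forces $P_m>0$ on $(r_s,\infty)$, and since each $r_j$ is simple the sign of $P_m$ flips across it, so consecutive real roots carry opposite derivative signs. Counting, exactly $\lceil s/2\rceil$ of the $r_j$ have $P_m'>0$ and $\lfloor s/2\rfloor$ have $P_m'<0$. Adding the $p$ complex roots in each half-plane yields, for $P_{m,\varepsilon}^+$, exactly $p+\lceil s/2\rceil$ roots in $\bbC_+$ and $p+\lfloor s/2\rfloor$ in $\bbC_-$. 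Substituting $s=2k-2p$ (case $m=2k$) gives $k$ and $k$, proving (i); substituting $s=2k-1-2p$ (case $m=2k-1$) gives $k$ and $k-1$, proving (ii). Finally, reality of the coefficients gives $P_{m,\varepsilon}^-(z)=\overline{P_{m,\varepsilon}^+(\bar z)}$, so the roots of $P_{m,\varepsilon}^-$ are the complex conjugates of those of $P_{m,\varepsilon}^+$; this interchanges the $\bbC_+$ and $\bbC_-$ counts, turning (ii) into (iii) (and being consistent with the symmetric count in (i)).

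For (iv), the decisive---and very simple---point is that $P_m$ maps $\bbR$ into $\bbR$, so for every $\varepsilon>0$ and every $t\in\bbR$ one has $\Im\big(P_{m,\varepsilon}^\pm(t)\big)=\mp\varepsilon\neq 0$; hence $P_{m,\varepsilon}^\pm$ has no real root for any $\varepsilon\in(0,\infty)$. Since the roots of a degree-$m$ polynomial with fixed nonzero leading coefficient depend continuously (as a multiset) on $\varepsilon$, the number of roots in $\bbC_+$, counted with multiplicity, is integer-valued and can change only when a root crosses the real axis; as this never occurs on $(0,\infty)$, the count is constant and equal to the value computed for small $\varepsilon$, and likewise for $\bbC_-$. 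The perturbed roots need no longer be simple once $\varepsilon$ grows, which is precisely the caveat recorded in the statement. I expect the only genuinely delicate point to be phrasing this last step rigorously---most cleanly by writing the $\bbC_+$-count as an argument-principle contour integral over a semicircle large enough to capture all roots, noting that the integrand has no pole on $\bbR$, so that the integral is locally constant in $\varepsilon$ and hence constant on the connected interval $(0,\infty)$.
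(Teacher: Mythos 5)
Your argument is correct and follows essentially the same route as the paper's proof: a first-order perturbation of each simple root (the paper phrases this via the Lagrange--B\"urmann formula, you via the implicit-function expansion $z-z_0\approx \pm i\varepsilon/P_m'(z_0)$, which is the same computation), the sign alternation of $P_m'$ at the real roots forced by $a_m>0$, conjugation symmetry to pass from $P_{m,\varepsilon}^+$ to $P_{m,\varepsilon}^-$, and for $(iv)$ the observation that $P_{m,\varepsilon}^\pm$ has no real zeros together with continuity of the roots and the impossibility of escape to infinity. Your explicit bookkeeping of the $p$ conjugate pairs versus the $s=m-2p$ real roots, and the argument-principle formulation of $(iv)$, are slightly more detailed than the paper's but not a different method.
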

%%%%%%
\begin{proof}
Since $\mu\in\bbC$ is a root of $P_{m,\varepsilon}^+$ if and only if $\ol{\mu}$ is a root of $P_{m,\varepsilon}^-$, assertion $(iii)$ follows from assertion $(ii)$, and assertion $(i)$ for $P_{m,\varepsilon}^-$ follows from assertion $(i)$ for $P_{m,\varepsilon}^+$. Thus, it suffices to prove items $(i)$ (for $P_{m,\varepsilon}^+$), $(ii)$, and $(iv)$.

As a special case of the Lagrange inversion theorem, in fact, as a special case of the Lagrange-- B\"urmann formula (see, e.g., \cite[\S~2.4]{He74}, \cite{Wi25}), considering 
\begin{equation}
F(w) = w/\phi(w), \, \text{ $\phi$ analytic near $0$ with $\phi(0) \neq 0$,}
\end{equation}
the equation $F(G(z)) = z$, $z \in \bbC$, $|z|$ sufficiently small, yields 
\begin{equation}
G(z) = \sum_{n \in \bbN} \f{1}{n} \bigg[\f{1}{(n-1)!} \big(\phi(w)^n\big)^{(n-1)}\bigg] z^n = \phi(0) z + \Oh\big(z^2\big). 
\lb{2.67} 
\end{equation}
An application of \eqref{2.67} to $\phi(w) = w/P_m(w+z_0) = (z-z_0)/P_m(z)$ with $w=z-z_0$ and $z_0$ a simple zero of $P_m(\dott)$, then shows that $w/\phi(w) = P_m(z)$ and $\phi(0) = 1/P_m'(z_0) \neq 0$. Thus, 
\begin{equation}
w/\phi(w) = i \varepsilon \, \text{ for $0 < \varepsilon$ sufficiently small,} 
\end{equation}
has the unique solution
\begin{equation}
w(\varepsilon) = z_0(\varepsilon) - z_0 = [1/P_m'(z_0)] i \varepsilon + \Oh\big(\varepsilon^2\big).
\end{equation}
If $z_m < \cdots < z_1$ are the real and simple zeros of $P_m$, then $a_m > 0$ yields  
\begin{equation}
\sgn(P_m'(z_k)) = (-1)^{k-1}, \quad k = 1,\dots,m,
\end{equation}
proving assertions $(i)$ (for $P_{m,\varepsilon}^+$) and $(ii)$. 

To prove assertion $(iv)$ one can argue as follows: As $\varepsilon$ varies through the interval $(0,\infty)$, the simple zero $z_0(\varepsilon)$ moves off the real axis, and if it ever would change its half-plane again, this could only happen under two possible scenarios, both of which will be refuted next: First, by continuity in $\varepsilon$, it would have to cross the real axis at some point $\varepsilon_0 \in (0,\infty)$, however, $P_m(\dott) + i \varepsilon_0$ cannot have any real zeros. Second, zeros could transition to the other open complex half-plane by going through infinity. However, the additive perturbation $i \varepsilon$ cannot imply such a scenario either. Thus, $z_0(\varepsilon)$ necessarily stays in the open complex half-plane it first entered when $0 < \varepsilon$ was sufficiently small. 
\end{proof} 
%%%%%%

%%%%%%
\begin{corollary} \lb{c2.12}
Let $P_m$ be a real polynomial of degree $m \in \bbN$ with positive highest coefficient $a_m>0$ $($cf.\ \eqref{2.64}$)$ and $S$ a closed and symmetric operator in $\cH$. Then the defeciency indices of $P_m(S)$ are given by 
\begin{equation}
n_{\pm}(P_m(S))=n_\pm(S^m).
\end{equation} 
\end{corollary}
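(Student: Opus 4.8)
The plan is to compute $n_{\pm}(P_m(S))$ directly by factoring $P_m(S) \mp i\varepsilon I_{\cH}$ over the zeros of the shifted polynomial $P_m(t) \mp i\varepsilon$ and applying Theorem~\ref{t2.6}\,$(ii)$, then to recognize the resulting expression as $n_{\pm}(S^m)$ via Theorems~\ref{t2.8} and \ref{t2.9}. First I would record the structural facts: since $P_m$ has real coefficients and $S$ is symmetric, the operator $A = P_m(S)$ is symmetric with $\dom(A) = \dom(S^m)$, and this domain is dense---so that $n_{\pm}(A)$ is meaningful---by Schm\"udgen's Theorem~\ref{l2.3} whenever one deficiency index of $S$ is finite (the same denseness is implicit on the right-hand side $n_{\pm}(S^m)$). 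Because $A$ is symmetric, $\bbC \backslash \bbR \subseteq \hatt\rho(A)$ and $\defe(A,\dott)$ is constant on each of the connected components $\bbC_+$, $\bbC_-$; hence for every $\varepsilon \in (0,\infty)$,
\[
n_+(A) = \defe(A,-i\varepsilon) = \defe(A + i\varepsilon I_{\cH},0), \quad n_-(A) = \defe(A,i\varepsilon) = \defe(A - i\varepsilon I_{\cH},0).
\]

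Fix $\varepsilon \in (0,\infty)$. The polynomial $P_m(t) + i\varepsilon$ has no real zeros (its imaginary part equals $\varepsilon$ on $\bbR$), so all $m$ of its roots $\rho_1,\dots,\rho_m$ lie in $\bbC_+ \cup \bbC_-$, and one has the operator identity
\[
A + i\varepsilon I_{\cH} = a_m \prod_{j=1}^{m} (S - \rho_j I_{\cH}),
\]
both sides having common domain $\dom(S^m)$. Each factor $S - \rho_j I_{\cH}$ is densely defined and closed with $0 \in \hatt\rho(S - \rho_j I_{\cH})$ (as $\rho_j \in \bbC \backslash \bbR \subseteq \hatt\rho(S)$), the partial products are densely defined by Theorem~\ref{l2.3}, and multiplication by the constant $a_m > 0$ changes neither the range closure nor the kernel of the adjoint. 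Iterating Theorem~\ref{t2.6}\,$(ii)$ (cf.\ Remark~\ref{r2.7}) therefore gives
\[
n_+(A) = \defe(A + i\varepsilon I_{\cH},0) = \sum_{j=1}^{m} \defe(S - \rho_j I_{\cH},0).
\]
Since $\defe(S - \rho_j I_{\cH},0) = \dim(\ker(S^* - \ol{\rho_j}\,I_{\cH}))$ and $\defe(S,\dott)$ is constant on the half-planes, I obtain the key evaluation $\defe(S - \rho_j I_{\cH},0) = n_-(S)$ when $\rho_j \in \bbC_+$ and $\defe(S - \rho_j I_{\cH},0) = n_+(S)$ when $\rho_j \in \bbC_-$; this crossing of subscripts, induced by the complex conjugation in the adjoint, is the one place where the sign bookkeeping must be done carefully. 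Writing $N_{\pm}$ for the number of the $\rho_j$ in $\bbC_{\pm}$, this yields $n_+(A) = N_+\,n_-(S) + N_-\,n_+(S)$, and the analogous computation with $P_m(t) - i\varepsilon$ yields $n_-(A)$.

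It remains to evaluate $N_{\pm}$, and here I expect the main obstacle: Lemma~\ref{l2.11} delivers these counts only when $P_m$ has $m$ simple real roots, whereas the corollary permits an arbitrary real $P_m$, possibly with multiple or genuinely complex roots. I would close this gap by a homotopy in the lower-order coefficients. Writing $P_m(t) = a_m t^m + L(t)$ with $\deg L < m$ and setting $Q_s(t) = a_m t^m + s\,L(t) + i\varepsilon$ for $s \in [0,1]$, each $Q_s$ has degree $m$ and no real zeros (its imaginary part on $\bbR$ is $\varepsilon$), so the number of its roots in $\bbC_+$ is a continuous, integer-valued (hence constant) function of $s$; consequently the counts for $P_m(t) + i\varepsilon$ coincide with those for $a_m t^m + i\varepsilon$, i.e.\ with the counts governing the factorization of $S^m + i\varepsilon I_{\cH}$ used in Theorems~\ref{t2.8} and \ref{t2.9}. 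Feeding these same $N_{\pm}$ into the formula for $n_+(A)$ reproduces exactly $n_+(S^m)$ (and similarly for $n_-$), and Theorems~\ref{t2.8}, \ref{t2.9} identify the common value as $k[n_+(S) + n_-(S)]$ when $m = 2k$ and as $k[n_+(S) + n_-(S)] + n_{\pm}(S)$ when $m = 2k+1$, completing the proof.
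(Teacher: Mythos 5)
Your proof is correct, and its overall skeleton---factor $P_m(S)\mp i\varepsilon I_{\cH}$ over the roots of $P_m(t)\mp i\varepsilon$, iterate Theorem \ref{t2.6}\,$(ii)$, and track which open half-plane each root lies in, with the correct crossing of subscripts $\defe(S-\rho_j I_{\cH},0)=n_{\mp}(S)$ for $\rho_j\in\bbC_{\pm}$---coincides with the paper's. Where you genuinely diverge is in how the half-plane counts $N_{\pm}$ are obtained. The paper first replaces $P_m$ by $P_m+c$ for a suitable $c\in\bbR$ so that the roots become simple (harmless, since defect numbers are constant on $\bbC_{\pm}$) and then invokes the perturbative root-counting Lemma \ref{l2.11}, which tracks each simple real zero to first order in $\varepsilon$. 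You instead deform the lower-order coefficients via $Q_s(t)=a_mt^m+sL(t)+i\varepsilon$, note that no root of $Q_s$ can touch $\bbR$ (imaginary part $\varepsilon$ there) or escape to infinity (fixed degree and leading coefficient), and conclude by continuity of the root count that $N_{\pm}$ agree with those for $a_mt^m+i\varepsilon$, i.e., with the explicit $\omega_j$-configurations already used in the proofs of Theorems \ref{t2.8} and \ref{t2.9}. This buys you several things: no reduction to simple roots, no smallness restriction on $\varepsilon$ (you in effect re-derive the portion of Lemma \ref{l2.11}, including its item $(iv)$, that the corollary actually needs), and a structural identification with $n_{\pm}(S^m)$ rather than a comparison of final formulas; the paper's route, in exchange, yields Lemma \ref{l2.11} as a quantitative statement of independent interest. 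The only place you should add a line is the assertion that the number of roots of $Q_s$ in $\bbC_+$ is continuous in $s$---a one-sentence appeal to the argument principle over a contour consisting of a real segment and a large semicircle (neither of which any root of $Q_s$ meets) makes this airtight. Your explicit appeal to Theorem \ref{l2.3} for denseness of $\dom(S^m)$ and of the partial products is, if anything, more careful than the paper's own proof, which leaves this implicit.
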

%%%%%%
\begin{proof}
If the roots of $P_m$ are not simple, there exists $c\in\bbR$ such that the roots of the polynomial $\wti P_m(z) = P_m(z)+c$, $z \in \bbC$, are simple. Since defect indices are constant in $\bbC_+$ as well as $\bbC_-$, it follows that $n_{\pm}\big(\wti P_m(S)\big)=n_\pm(P_m(S))$. Corollary \ref{c2.12} now follows from Theorem \ref{t2.6}\,$(ii)$ and Lemma \ref{l2.11}.
\end{proof}
%%%%%%%

%%%%%%%%%%%%%%%%%%%%%%%%%%%%%%%
%%%%%%%%%%%%%%%%%%%%%%%%%%%%%%%
\section{Some Applications to Ordinary and \\ Partial Differential Operators} \lb{s3}
%%%%%%%%%%%%%%%%%%%%%%%%%%%%%%%
%%%%%%%%%%%%%%%%%%%%%%%%%%%%%%%

In the first part of this section we apply the principal results of Section \ref{s2} to powers of certain minimal Sturm--Liouville operators that originally motivated the writing of this note and also deal with minimal higher even-order ordinary differential operators. In the second part we consider two applications to partial differential operators. 

%%%%%%%%
\subsection{A Bessel-Type Differential Operator and Its Square.}
%%%%%%%%

We start this subsection by re-examining an example that was first analyzed by Chaudhuri and Everitt \cite{CE69}, but present it in a new light as a particular Bessel-type differential expression on $(0,1)$.

Introduce
\begin{equation}
\tau_{2,\alpha} = - \f{d^2}{dx^2} + \f{\alpha^2 - (1/4)}{(1-x)^2}, \quad \alpha \in [1,\infty), \; x \in [0,1),   \lb{3.1} 
\end{equation}
implying
\begin{align}
\begin{split} 
\tau_{4,\alpha} = \tau_{2,\alpha}^2 = \f{d^4}{dx^4} - \f{d}{dx} \f{2 \alpha^2 - (1/2)}{(1-x)^2} \f{d}{dx} 
+ \f{\alpha^4 - (13/2) \alpha^2 + (5/4)^2}{(1-x)^4},&  \\
\alpha \in [1,\infty), \; x \in [0,1).&
\end{split}
\end{align}
The first two quasi-derivatives associated with $\tau_{2,\alpha}$ then are of the type 
\begin{equation}
g^{[0]}(x) = g(x), \; g^{[1]}(x) = g'(x), \quad x \in [0,1), \; g, g' \in AC_{loc}([0,1)), 
\end{equation} 
similarly, the first four quasi-derivatives at $x=0$ associated with $\tau_{4,\alpha}$ then are of the type 
\begin{align}
\begin{split}
& g^{[0]}(x) = g(x), \; g^{[1]}(x) = g'(x), \; g^{[2]}(x) = g''(x),     \\
& g^{[3]}(x) = \big[2 \alpha^2 - (1/2)\big] (1-x)^{-2} g'(x) - g'''(x), 
\quad x \in [0,1), \\ 
& \hspace*{4.9cm} g, g', g'', g''' \in AC_{loc}([0,1)).  
\end{split} 
\end{align} 
Here we used the fact that the first four quasi-derivatives associated with the fourth-order differential expression regular at $x=0$ 
\begin{align} 
\begin{split} 
& \f{d^2}{dx^2} p_0(x) \f{d^2}{dx^2} - \f{d}{dx} p_1(x) \f{d}{dx} + p_2(x), \quad x \in [0,1), \\
& (1/p_0), p_1, p_2 \in L^1_{loc}([0,1)),  
\end{split}
\end{align}
are of the type (cf.\ \cite[\S~17.3]{Na68}) 
\begin{align} 
\begin{split} 
& g^{[0]}(x) = g(x), \; g^{[1]}(x) = g'(x), \; g^{[2]}(x) = p_0(x) g''(x),    \\
& g^{[3]}(x) = p_1(x) g'(x) - \big[g^{[2]}\big]'(x), \quad x \in [0,1), \; g^{[k]} \in AC_{loc}([0,1)), \, 0 \leq k \leq 3.
\end{split} 
\end{align} 

Consequently,
\begin{equation}
g^{[k]} \in AC_{loc}([0,1)) \, \text{ if and only if } \, g^{(k)} \in AC_{loc}([0,1)), \; 0 \leq k \leq 3,
\end{equation}
and 
\begin{equation}
g^{[k]}(0) = 0 \, \text{ if and only if } \, g^{(k)}(0) = 0, \; 0 \leq k \leq 3.
\end{equation}

Next, introducing 
\begin{equation}
u_{\beta}(x) = (1-x)^{\beta}, \quad \beta \in \bbR, \; x \in [0,1),
\end{equation}
one obtains
\begin{equation}
(\tau_{2,\alpha} u_{b_j})(x) = 0, \quad b_j = (1/2) + (-1)^{j+1} \alpha, \; j=1,2.
\end{equation}
Similarly, 
\begin{equation}
(\tau_{4,\alpha} u_{b_j})(x) = \Big(\big(\tau_{2,\alpha}\big)^2 u_{b_j}\Big)(x) = 0, \quad 
b_j = \begin{cases} (1/2) + (-1)^{j+1} \alpha, & j=1,2, \\ 
(5/2) + (-1)^{j+1} \alpha, & j=3,4.
\end{cases} \\ 
\end{equation}

Thus, one  obtains
\begin{align}
\begin{split}
& \tau_{2,\alpha} u_{b_1} = 0 = \tau_{2,\alpha} u_{b_2}, \quad \alpha \in [1,\infty),   \\
& \tau_{2,\alpha} u_{b_3} = - 4(1+\alpha) u_{b_1}, \quad \alpha \in [1,\infty),   \\
& \tau_{2,\alpha} u_{b_4} = - 4(1-\alpha) u_{b_2}, \quad \alpha \in [1,\infty),   \lb{3.13} \\
& u_{b_1},  u_{b_3},  u_{b_4} \in L^2((0,1)), \;  u_{b_2} \notin L^2((0,1)), \quad \alpha \in [1, 3).   
\end{split} 
\end{align}
In particular, $\tau_{2,\alpha}$, $\alpha \in [1,\infty)$, and $\tau_{4,\alpha}$, $\alpha \in [1,\infty)$, are regular at $x=0$, but since $u_{b_2} \notin L^2([0,1))$ for $\alpha \in [1, \infty)$, $\tau_{2,\alpha}$, $\alpha \in [1,\infty)$, is in the limit point case at the singular endpoint $x=1$.

Minimal and maximal operators associated with $\tau_{2,\alpha}$ and $\tau_{4,\alpha}$ are then of the following form (here $\alpha \in [1,\infty)$):
\begin{align}
& T_{2,\alpha,max} f = \tau_{2,\alpha} f,   \\
& f \in \dom(T_{2,\alpha,max}) = \big\{g \in L^2((0,1)) \, \big| \, g, g' \in AC_{loc}([0,1)); \, \tau_{2,\alpha} g \in L^2((0,1))\big\},     \no \\
& T_{2,\alpha,min} f = \tau_{2,\alpha} f,   \\
& f \in \dom(T_{2,\alpha,min}) = \{g \in \dom(T_{2,\alpha,max}) \, | \, g(0) = g'(0) = 0\},   \no  \\
& T_{4,\alpha,max} f = \tau_{4,\alpha} f,   \\
& f \in \dom(T_{4,\alpha,max}) = \big\{g \in L^2((0,1)) \, \big| \, g, g', g'', g''' \in AC_{loc}([0,1));    \no \\ 
& \hspace*{7.3cm} \tau_{4,\alpha} g \in L^2((0,1))\big\},     \no \\
& T_{4,\alpha,min} f = \tau_{4,\alpha} f,   \\
& f \in \dom(T_{4,\alpha,min}) = \{g \in \dom(T_{4,\alpha,max}) \, | \, g(0) = g'(0) = g''(0) = g'''(0) = 0\},   \no
\end{align}
with 
\begin{align} 
\begin{split} 
& T_{2,\alpha,max}^* = T_{2,\alpha,min}, \quad T_{2,\alpha,min}^* = T_{2,\alpha,max},     \\
& T_{4,\alpha,max}^* = T_{4,\alpha,min}, \quad T_{4,\alpha,min}^* = T_{4,\alpha,max}; \quad \alpha \in [1,\infty). 
\end{split} 
\end{align}
Similarly,
\begin{align}
& T_{2,\alpha,max}^2 f = \tau_{4,\alpha} f,   \\
& f \in \dom\big(T_{2,\alpha,max}^2\big) = \big\{g \in L^2((0,1)) \, \big| \, g, g', g'', g''' \in AC_{loc}([0,1));    \no \\ 
& \hspace*{6.5cm} \tau_{2,\alpha} g, \tau_{4,\alpha} g \in L^2((0,1))\big\}     \no \\
& \hspace*{2.9cm} = \dom(T_{2,\alpha,max}) \cap \dom(T_{4,\alpha,max}), \quad \alpha \in [1,\infty),    \\
& \hspace*{2.9cm} \subsetneqq \dom(T_{2,\alpha,max}), \,  \dom(T_{4,\alpha,max}), \quad \alpha \in [1,3),      \\
& T_{2,\alpha,min}^2 f = \tau_{4,\alpha} f,   \\
& f \in \dom\big(T_{2,\alpha,min}^2\big) = \big\{g \in \dom\big(T_{2,\alpha,max}^2\big)  \, \big| \, g(0)=g'(0)=g''(0)=g'''(0)=0\}     \no \\
& \hspace*{2.9cm} = \dom(T_{2,\alpha,min}) \cap \dom(T_{4,\alpha,min}), \quad \alpha \in [1,\infty),      \\
& \hspace*{2.9cm} \subsetneqq \dom(T_{4,\alpha,min}), \, \dom(T_{2,\alpha,min}), \quad \alpha \in [1,3).   \lb{3.24} 
\end{align}
Indeed, assuming $\alpha \in [1,3)$ throughout \eqref{3.25}--\eqref{3.34}, 
\begin{equation}
u_{b_4} \in \dom(T_{4,\alpha,max}), \quad u_{b_4} \notin \dom(T_{2,\alpha,max}),     \lb{3.25}
\end{equation}
and similarly, introducing 
\begin{equation}
(\wti u_{b_4})(x) = \begin{cases} 0, & x \in [0,1/4], \\
u_{b_4}(x), & x \in [1/2,1),
\end{cases} \quad   \wti u_{b_4} \in C^{\infty}((0,1)), 
\end{equation}
then 
\begin{equation}
\wti u_{b_4} \in \dom(T_{4,\alpha,min}), \quad \wti u_{b_4} \notin \dom(T_{2,\alpha,min}),
\end{equation}
In addition, recalling $u_{\beta}(x) = (1-x)^{\beta}$, $x \in [0,1)$, one computes 
\begin{align}
& (\tau_{2,\alpha} u_{\beta})(x) = \big[- \beta(\beta-1) + \alpha^2 - (1/4)\big] (1-x)^{\beta - 2},    \\
& (\tau_{4,\alpha} u_{\beta})(x) = \big[\alpha^2 -(1/4) - \beta(\beta-1)\big]\big[\alpha^2 - (1/4) - (\beta-2)(\beta-3)\big] (1-x)^{\beta-4}   \no \\
& \hspace*{1.75cm} = \big[\beta(\beta-1)(\beta-2)(\beta-3) - \big[2 \alpha^2 - (1/2)\big] \beta(\beta-3)   \no \\
& \hspace*{2.3cm} + \alpha^4 - (13/2) \alpha^2 + (5/4)^2\big] (1-x)^{\beta - 4}; \quad x \in [0,1),
\end{align} 
and assuming $\beta = 3$ as well as  
\begin{equation}
\alpha^2 - 6 - (1/4) \neq 0, \quad \alpha^2 - (1/4) \neq 0, 
\end{equation}
one concludes that
\begin{equation}
u_3, \tau_{2,\alpha} u_3 \in L^2([0)), \quad \tau_{4,\alpha} u_3 \notin L^2([0,1)),
\end{equation}
and hence,
\begin{equation}
u_3 \in \dom(T_{2,\alpha,max}), \quad u_3 \notin \dom(T_{4,\alpha,max}). 
\end{equation}
Finally, introducing 
\begin{equation}
(\wti u_{3})(x) = \begin{cases} 0, & x \in [0,1/4], \\
u_3(x), & x \in [1/2,1),
\end{cases} \quad   \wti u_3 \in C^{\infty}((0,1)), 
\end{equation}
one obtains 
\begin{equation}
\wti u_3 \in \dom(T_{2,\alpha,min}), \quad \wti u_3 \notin \dom(T_{4,\alpha,min}).     \lb{3.34}
\end{equation}

Next, we recall the following well-known fact: 
%%%%%%
\begin{proposition} $($See, e.g., \cite{AD12}, \cite{Gu69}, \cite{Ho69}, \cite{Sc70}, \cite{vCG70}.$)$ ${}$ \lb{p3.1} \\
Suppose that $T_j$, $j=1,2$, are densely defined in the complex, separable Hilbert space $\cH$, $T_1$ is closed in $\cH$, $\dim(\ker(T_1^*)) < \infty$, then $T_2 T_1$ is densely defined and 
\begin{equation}
T_1^* T_2^* = (T_2 T_1)^*.     \lb{3.35}
\end{equation}
\end{proposition}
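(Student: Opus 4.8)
The plan is to establish the two inclusions $T_1^* T_2^* \subseteq (T_2 T_1)^*$ and $(T_2 T_1)^* \subseteq T_1^* T_2^*$ separately, after first disposing of the density of $\dom(T_2 T_1)$. Throughout I would exploit that a closed operator $T_1$ with finite-dimensional deficiency subspace $\ker(T_1^*)$ is right semi-Fredholm, so that by Lemma \ref{l2.2} the range $\ran(T_1)$ is closed and the restriction of $T_1$ to $\dom(T_1) \cap [\ker(T_1)]^{\bot}$ is a closed bijection onto $\ran(T_1)$; the closed graph theorem then furnishes a \emph{bounded} partial inverse $S \in \cB(\ran(T_1), [\ker(T_1)]^{\bot})$ satisfying $S T_1 = P$, the orthogonal projection onto $[\ker(T_1)]^{\bot}$, and $T_1 S = I$ on $\ran(T_1)$. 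This bounded $S$ is the main technical device.

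Next I would settle density. Since $\ker(T_1^*) = \ran(T_1)^{\bot}$ has finite dimension $n$, the closed subspace $\ran(T_1)$ has finite codimension in $\cH$; an elementary correction argument (remove the finitely many bad coordinates from an approximating sequence, exactly as one does for a single bounded functional) shows that $\dom(T_2) \cap \ran(T_1)$ is dense in $\ran(T_1)$. Because $S$ is bounded with $\overline{\ran(S)} = [\ker(T_1)]^{\bot}$, the set $S(\dom(T_2) \cap \ran(T_1))$ is dense in $[\ker(T_1)]^{\bot}$; since this set is contained in $\dom(T_2 T_1)$ and $\ker(T_1) \subseteq \dom(T_2 T_1)$ as well, the orthogonal sum is dense, and so $\dom(T_2 T_1)$ is dense in $\cH$. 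Granting this, the inclusion $T_1^* T_2^* \subseteq (T_2 T_1)^*$ is exactly the first assertion of item $(11)$ above, via $(T_2 T_1 f, g)_{\cH} = (T_1 f, T_2^* g)_{\cH} = (f, T_1^* T_2^* g)_{\cH}$ for $g \in \dom(T_1^* T_2^*)$ and $f \in \dom(T_2 T_1)$.

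The heart of the matter is the reverse inclusion. Fix $g \in \dom((T_2 T_1)^*)$ and set $h = (T_2 T_1)^* g$, so that $(T_2 T_1 f, g)_{\cH} = (f,h)_{\cH}$ for all $f \in \dom(T_2 T_1)$. Testing against $f \in \ker(T_1) \subseteq \dom(T_2 T_1)$ shows $h \in [\ker(T_1)]^{\bot}$. For $\phi \in \dom(T_2) \cap \ran(T_1)$ the vector $S\phi$ lies in $\dom(T_2 T_1)$ with $T_1 S\phi = \phi$, whence $(T_2\phi, g)_{\cH} = (S\phi, h)_{\cH} = (\phi, S^* h)_{\cH}$; in particular $\phi \mapsto (T_2 \phi, g)_{\cH}$ is $\cH$-bounded on $\dom(T_2) \cap \ran(T_1)$. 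The crucial point is to promote this to boundedness on all of $\dom(T_2)$: because $\ran(T_1)$ has codimension $n$, the subspace $\dom(T_2) \cap \ran(T_1)$ has codimension at most $n$ in $\dom(T_2)$, and a linear functional bounded on a finite-codimension subspace whose complementary coordinates are themselves $\cH$-bounded (here supplied by the orthogonal projection onto the finite-dimensional $\ker(T_1^*)$) is bounded on the whole space. Hence $g \in \dom(T_2^*)$.

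Finally I would identify the value. From the identity above, $T_2^* g - S^* h$ annihilates the dense subspace $\dom(T_2) \cap \ran(T_1)$ of $\ran(T_1)$, so $T_2^* g = S^* h + k$ with $k \in \ran(T_1)^{\bot} = \ker(T_1^*)$. Using $S T_1 = P$ and $h \in [\ker(T_1)]^{\bot}$ one computes $(T_1 f, S^* h)_{\cH} = (P f, h)_{\cH} = (f, h)_{\cH}$ for all $f \in \dom(T_1)$, i.e.\ $S^* h \in \dom(T_1^*)$ and $T_1^* S^* h = h$. Since $k \in \ker(T_1^*) \subseteq \dom(T_1^*)$, it follows that $T_2^* g \in \dom(T_1^*)$ and $T_1^* T_2^* g = h = (T_2 T_1)^* g$, giving the desired inclusion and thus equality. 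I expect the main obstacle to be precisely the passage from boundedness of $\phi \mapsto (T_2\phi,g)_{\cH}$ on $\dom(T_2)\cap\ran(T_1)$ to boundedness on all of $\dom(T_2)$: this is the one step where the finiteness of $\dim(\ker(T_1^*))$ is genuinely indispensable, and it is also where both the density claim and the adjoint formula would fail were $\ran(T_1)$ not closed.
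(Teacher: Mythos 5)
The paper offers no proof of Proposition \ref{p3.1} at all---it is simply quoted from the literature---so your argument has to stand on its own, and apart from one point it does. The construction of the bounded partial inverse $S$ with $ST_1=P$ and $T_1S=I_{\ran(T_1)}$, the finite-rank correction establishing density of $\dom(T_2)\cap\ran(T_1)$ in $\ran(T_1)$ and hence of $\dom(T_2T_1)$ in $\cH$, the promotion of the boundedness of $\phi\mapsto(T_2\phi,g)_{\cH}$ from the finite-codimension subspace $\dom(T_2)\cap\ran(T_1)$ to all of $\dom(T_2)$ (where you correctly observe that the complementary coordinates are read off from the orthogonal projection onto the finite-dimensional $\ker(T_1^*)$ and are therefore $\cH$-bounded---the one place a functional bounded on a finite-codimension subspace could otherwise fail to extend boundedly), and the identification $T_1^*S^*h=h$ using $h\in[\ker(T_1)]^{\bot}$ are all correct; this is essentially the argument of the cited sources.

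The genuine gap is your very first inference: a densely defined closed operator with $\dim(\ker(T_1^*))<\infty$ need \emph{not} have closed range, so it is not automatically right semi-Fredholm in the sense of Definition \ref{d2.3}, which \emph{postulates} closed range rather than deriving it, and Lemma \ref{l2.2} gives no help here. A compact, injective, nonnegative $T_1$ with dense, nonclosed range has $\ker(T_1^*)=\{0\}$. The hypothesis $\dim(\ker(T_1^*))<\infty$ controls the codimension of $\overline{\ran(T_1)}$, not the algebraic codimension of $\ran(T_1)$, and the two differ exactly when the range is not closed. Worse, without closed range the proposition itself fails: for such a $T_1$ one can choose a dense subspace $\dom(T_2)$ with $\dom(T_2)\cap\ran(T_1)=\{0\}$ (two dense subspaces of an infinite-dimensional $\cH$ can intersect trivially), whence $\dom(T_2T_1)=\ker(T_1)=\{0\}$ is not dense. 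Since item $(11)$ in Section \ref{s2} glosses the hypothesis as ``$T_1$ is right semi-Fredholm,'' closedness of $\ran(T_1)$ is evidently intended; you should state it as an assumption (or invoke Definition \ref{d2.3} directly) rather than claim to deduce it. With that single repair, your proof is complete.
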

%%%%%%
We also refer, for instance, to \cite{FL77}, \cite[App.~B]{GGHT12}, \cite{Gu11}, \cite{Ho68}, \cite{vC72}, and the references therein for more facts in connection with \eqref{3.35}.

Clearly, this fact applies to $T_1=T_2 = T_{2,\alpha,max}$, $\alpha \in [1,\infty)$, and to $T_1=T_2 = T_{2,\alpha,min}$, $\alpha \in [1,\infty)$, implying 
\begin{align} 
\begin{split} 
& T_{2,\alpha,max}^2 = \big(T_{2,\alpha,min}^*\big)^2 = \big(T_{2,\alpha,min}^2\big)^*,    \\
& T_{2,\alpha,min}^2 = \big(T_{2,\alpha,max}^*\big)^2 = \big(T_{2,\alpha,max}^2\big)^*; \quad \alpha \in [1,\infty). 
\end{split} 
\end{align}

At this point we can make the connection with indices and defect numbers as discussed in Section \ref{s2}. Assuming 
$\alpha \in [1,3)$, one obtains from \eqref{3.13}--\eqref{3.24},  
\begin{align}
& \ker(T_{2,\alpha,min}) = \{0\},   \\
& \ker(T_{2,\alpha,min}^*) = \{u_{b_1}\}, \quad \dim( \ker(T_{2,\alpha,min}^*)) = 1,    \\
& \ind(T_{2,\alpha,min}) = \dim(\ker(T_{2,\alpha,min})) - \dim( \ker(T_{2,\alpha,min}^*)) = - 1,   \\
& \defe(T_{2,\alpha,min},0) = 1; \quad \alpha \in [1,3), 
\end{align}
and 
\begin{align}
& \ker(T_{4,\alpha,min}) = \{0\},   \\
& \ker(T_{4,\alpha,min}^*) = \{u_{b_1}, u_{b_3}, u_{b_4}\}, \quad \dim( \ker(T_{4,\alpha,min}^*)) = 3,    \\
& \ind(T_{4,\alpha,min}) = \dim(\ker(T_{4,\alpha,min})) - \dim( \ker(T_{4,\alpha,min}^*)) = - 3,   \\
& \defe(T_{4,\alpha,min},0) = 3; \quad \alpha \in [1,3). 
\end{align}

Finally,
\begin{align}
& \ker\big(T_{2,\alpha,min}^2)\big) = \{0\},   \\
& \ker\big(\big(T_{2,\alpha,min}^2\big)^*\big) = \{u_{b_1}, u_{b_3}\}, \quad \dim( \ker(T_{2,\alpha,min}^*)) = 2,    \\
& \ind\big(T_{2,\alpha,min}^2\big) = \dim\big(\ker\big(T_{2,\alpha,min}^2\big)\big) - \dim\big(\ker\big(\big(T_{2,\alpha,min}^2\big)^*\big)\big) = - 2,   \\
& \defe\big(T_{2,\alpha,min}^2,0\big) = 2 = 2 \defe(T_{2,\alpha,min},0); \quad \alpha \in [1,3), 
\end{align}
in agreement with \eqref{2.44}.

%%%%%%%
\begin{remark} \lb{r3.2}
The original example studied by Chaudhuri and Everitt \cite{CE69} was associated with the differential expression
\begin{equation}
\tau_{2,CE} = - \f{d}{dx} \f{1}{6} (x+1)^4 \f{d}{dx} + (x+1)^2, \quad x \in [0,\infty),    \lb{3.49}
\end{equation}
however, the Liouville--Green transform, see \cite[Sect.~3.5]{GNZ24} for details, and some scaling in $x$, reduce  \eqref{3.49} to \eqref{3.1} with $\alpha = \sqrt{33}/2 \in (1,3)$ as follows: Consider the change of variables
\begin{align}
& t(x) = \int_0^x dx' 6^{1/2} (x' +1)^{-2} = 6^{1/2} \big[1 - (x+1)^{-1}\big], \; x \in [0,\infty), \\     
& t(0) = 0, \; t(\infty) = 6^{1/2},   \\
& \wti u(t) = 6^{-1/4} (x+1) u(x), \quad t \in \big[0,6^{1/2}\big), 
\end{align} 
transforms 
\begin{equation}
(\tau_{2,CE} u)(x) = z u(x),  \quad z \in \bbC, \; x \in [0,\infty),
\end{equation}
into 
\begin{equation}
- \ddot {\wti u} (t) + 8 \big[6^{1/2} -t\big]^{-2} \wti u(t) = z \wti u(t), \quad z \in \bbC, \; t \in \big[0,6^{1/2}\big).    \lb{3.53} 
\end{equation}
The scaling transformation $t \mapsto 6^{1/2} t$ then reduces \eqref{3.53} to 
\begin{equation}
(\tau_{2, \sqrt{33}/2} u)(s) = 6 z u(s),  \quad z \in \bbC, \; s \in [0,1) 
\end{equation} 
(cf.\ \eqref{3.1}). In other words, $\tau_{2,\alpha}$, $\alpha \in [1,3)$ represents a one-parameter extension of the original example by Chaudhuri and Everitt \cite{CE69}.
\hfill $\diamond$
\end{remark}
%%%%%%%

%%%%%%%
\subsection{Further illustrations of Theorems \ref{t2.8} and \ref{t2.9} in the ODE context}
%%%%%%%
We continue illustrating the abstract approach of Section \ref{s2} in the concrete case of certain ordinary differential operators generated by limit circle differential expressions.

Letting $n \in \bbN$, and recalling the quasi-derivatives
\begin{align} 
\begin{split} 
& u^{[0]} = u, \; u^{[k]} = u^{(k)}, \; 1 \leq k \leq (n-1), \; u^{[n]} = p_0 u^{(n)},   \lb{3.55} \\
& u^{[n+k]} = p_k u^{[n-k]} - \big(u^{[n+k-1]}\big)', \; 1 \leq k \leq n,
\end{split}
\end{align}
in particular,
\begin{equation}
\tau_{2n} u = u^{[2n]} = \sum_{k=0}^n (-1)^{n-k} \big(p_k u^{(n-k)}\big)^{(n-k)},    \lb{3.56}
\end{equation}
one assumes
\begin{align}
\begin{split}
& 1/p_0, p_1,\dots,p_n \, \text{ are (Lebesgue) measurable in $(a,b) \subseteq \bbR$,}  \\
& 1/p_0, p_1,\dots,p_n \in L^1_{loc}((a,b)).    \lb{3.57}
\end{split} 
\end{align}

%%%%%%%
\begin{proposition} \lb{p3.3}
Suppose $T_{2n,min}$ is the minimal operator in $L^2((a,b))$ generated by the $2n$-th order symmetric differential expression $\tau_{2n}$ with a.e.~real-valued coefficients $p_0,\dots,p_n$ on $(a,b) \subseteq \bbR$, $n \in \bbN$, satisfying \eqref{3.55}--\eqref{3.57}. In addition, suppose that  $\tau_{2n}$ is in the limit circle case at $a$ and $b$, that is, for some $($and hence for all\,$)$ $z \in \bbC$, $\tau_{2n} y = z y$ has $2n$ linearly independent solutions $y_j$ satisfying $y_j \in L^2((a,b))$, $1 \leq j \leq 2n$. Then, $T_{2n,min}$ has deficiency indices $(2n,2n)$ and $(T_{2n,min})^m$ has deficiency indices $(2mn,2mn)$, $m \in \bbN$. 
\end{proposition}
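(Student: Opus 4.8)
The plan is to reduce the statement to the abstract machinery of Section \ref{s2} once the deficiency indices of $T_{2n,min}$ itself are identified. Throughout, set $\cH = L^2((a,b))$. First I would invoke the standard Weyl--Naimark theory for the symmetric differential expression $\tau_{2n}$ (see, e.g., \cite[\S\S~17, 18]{Na68}): under the coefficient hypotheses \eqref{3.55}--\eqref{3.57}, the minimal operator $T_{2n,min}$ is densely defined, closed, and symmetric in $\cH$, with $(T_{2n,min})^* = T_{2n,max}$, and for every $z \in \bbC$ one has the identification $\ker(T_{2n,max} - z I_{\cH}) = \{y \in L^2((a,b)) \,|\, \tau_{2n} y = z y\}$. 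Since by hypothesis $\tau_{2n}$ is in the limit circle case at both endpoints $a$ and $b$, all $2n$ linearly independent solutions of $\tau_{2n} y = z y$ lie in $L^2((a,b))$ for every $z \in \bbC$; specializing to $z = \pm i$ then yields
\begin{equation}
n_{\pm}(T_{2n,min}) = \dim(\ker(T_{2n,max} \mp i I_{\cH})) = 2n. \notag
\end{equation}
In particular, the deficiency indices of $T_{2n,min}$ are finite and equal, which settles the first assertion.

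Next I would feed this directly into the abstract results. Put $S = T_{2n,min}$, a densely defined, closed, symmetric operator with $n_+(S) = n_-(S) = 2n \in \bbN$. For even powers $m = 2k$ this is precisely the situation of Theorem \ref{t2.8}\,$(i)$, and for odd powers $m = 2k+1$ that of Theorem \ref{t2.9}\,$(i)$; in both cases the finiteness of the deficiency indices guarantees, via the respective theorems (and, if one prefers, via Schm\"{u}dgen's Theorem \ref{l2.3}, since at least one deficiency index of $S$ is finite), that the powers involved are densely defined, so the product formulas genuinely apply. Because $n_+(S) = n_-(S) = 2n = n(S)$, the common formula \eqref{2.61} then gives
\begin{equation}
n_{\pm}\big((T_{2n,min})^m\big) = m \, n(S) = 2mn, \quad m \in \bbN, \notag
\end{equation}
which is the second assertion. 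I would emphasize that this computes the deficiency indices of the \emph{operator power} $(T_{2n,min})^m$ abstractly, without any separate analysis of the $2mn$-th order composed differential expression $\tau_{2n}^m$.

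The only genuinely non-routine input is the first step: the classical identification of the deficiency subspaces $\ker(T_{2n,max} \mp i I_{\cH})$ with the $L^2$-solution spaces of $\tau_{2n} y = \pm i y$, combined with the limit circle characterization that renders all $2n$ solutions square-integrable. Once the value $n_{\pm}(T_{2n,min}) = 2n$ is in hand, the passage to the powers is a pure invocation of Theorems \ref{t2.8} and \ref{t2.9}, equivalently of \eqref{2.61}. The equality of the two deficiency indices — automatic here since all solutions are $L^2$ for every $z$, and also reflecting that $\tau_{2n}$ has real-valued coefficients and hence commutes with complex conjugation — is exactly what permits the single clean expression $2mn$ rather than two separate formulas for $n_+$ and $n_-$.
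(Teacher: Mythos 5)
Your argument is correct and is essentially the paper's own: the authors simply remark that Proposition \ref{p3.3} ``is a consequence of \eqref{2.61}'', i.e., of Theorems \ref{t2.8} and \ref{t2.9} in the case of equal finite deficiency indices, which is exactly the route you take. Your additional spelling-out of the first step --- identifying $n_{\pm}(T_{2n,min})$ with the dimension of the $L^2$-solution space of $\tau_{2n}y=\pm i y$ and using the limit circle hypothesis to get the value $2n$ --- is the standard Weyl--Naimark input the paper leaves implicit, and it is handled correctly.
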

%%%%%%%

This is a consequence of \eqref{2.61} and should be compared with \cite[Thm.~V.4.1]{KRZ77}. 

An elementary  concrete example illustrating Proposition \ref{p3.3} (which originally motivated writing this note)   is provided by the Legendre operator on $(-1,1)$:

%%%%%%%
\begin{example} \lb{e3.4} $(${\rm The Legendre operator in $L^2((-1,1))$}$)$.  ${}$ \\
Consider the Legendre differential expression
\begin{equation}
\tau_{Leg} = - \f{d}{dx} \big(1 - x^2\big) \f{d}{dx}, \quad x \in (-1,1).  
\end{equation}
One verifies that  
\begin{equation}
u_1 (0,x) = 1, \quad u_2 (0,x) = 2^{-1} \ln((1-x)/(1+x)), \quad x \in (-1,1),  
\end{equation}
satisfy
\begin{equation}
\tau_{Leg} u_j = 0, \quad u_j \in L^2((-1,1)), \; j=1,2. 
\end{equation}
Thus, $\tau_{Leg}$ in the limit circle case $($and singular\,$)$ at both endpoints $\pm 1$ $($see, e.g., \cite[Sect.~5.3]{GNZ24}$)$. Consequently, the associated minimal operator $T_{Leg, min}$ associated with $\tau_{Leg}$ in $L^2((-1,1))$ has deficiency indices $(2,2)$. Explicitly, $T_{Leg, min}$ and its adjoint, the maximal operator associated with $\tau_{Leg}$, are given by 
\begin{align}
& T_{Leg, max} f = \tau_{Leg} f,    \no \\
& f \in \dom(T_{Leg, max}) = \big\{g \in L^2((-1,1)) \, \big| \, g, g^{[1]} \in AC_{loc}((-1,1));     \\ 
& \hspace*{6.7cm} \tau_{Leg} g \in L^2((-1,1))\big\},     \no \\
& T_{Leg, min} f = \tau_{Leg} f,    \no \\
& f \in \dom(T_{Leg, min}) = \big\{g \in \dom(T_{Leg, max})  \, \big| \,  \wti g(\pm 1) = \wti g^{\, \prime}(\pm 1) = 0 \big\},    \no \\
& T_{Leg, min}^* = T_{Leg, max}, \quad T_{Leg, max}^* = T_{Leg, min},
\end{align}
where 
\begin{align}
\begin{split} 
\wti g(\pm 1) &= - W(u_1(0, \dott), g)(\pm 1)    \lb{23.7.3.50} \\
&= - (p g')(\pm 1) 
= \lim_{x \to \pm 1} g(x)\big/\big[2^{-1} \ln((1-x)/(1+x))\big] ,   
\end{split} \\ 
\begin{split} 
\wti g^{\, \prime} (\pm 1) &= W(u_2(0, \dott), g)(\pm 1)  \\
&= \lim_{x \to \pm 1} \big[g(x) - \wti g(\pm 1) 2^{-1} \ln((1-x)/(1+x))\big]. \lb{23.7.3.6.16A}
\end{split} 
\end{align}
are the generalized boundary values adapted to $\tau_{Leg}$ 
$($see, \cite{ELM02}, \cite{GLN20}, \cite[Example~13.7.3]{GNZ24}$)$. 

Here we employed the fact that 
\begin{equation} 
g^{[1]}(x) = \big(1-x^2\big) g'(x), \quad x \in (-1,1), 
\end{equation} 
represents the first quasi-derivative of $g \in \dom(T_{Leg, min}^*) = \dom(T_{Leg, max})$, and 
\begin{equation} 
W(h,k)(x) = h(x) k^{[1]}(x) - h^{[1]}(x) k(x), \quad x \in (-1,1), 
\end{equation}
denotes the Wronskian of $h, k \in \dom(T_{Leg, max})$ in connection with the Legendre differential expression $\tau_{Leg}$. 

By Theorem \ref{t2.6}, the deficiency indices of $(T_{Leg, min})^m$ are given by 
\begin{equation}
n_{\pm} \big((T_{Leg,min})^m\big) = 2m, \quad m \in \bbN,
\end{equation} 
where the form of $(T_{Leg,\min })^{m}$
is explicitly given by 
\begin{equation}
(T_{Leg,\min })^{m}f=\tau _{Leg}^{m}f=\sum_{j=1}^{m}(-1)^{j}PS_{m}^{(j)}%
\frac{d^{j}}{dx^{j}}(1-x^{2})^{j}\frac{d^{j}}{dx^{j}} f,
\end{equation} 
for $f \in \rm{dom}$$(T_{Leg,min})^m$, and $PS_{m}^{(j)}$ is the Legendre--Stirling number defined by%
\begin{equation}
PS_{m}^{(j)}=\sum_{k=1}^{j}(-1)^{k+j}\frac{(2k+1)(k^{2}+k)^{m}}{(k+j+1)!(j-k)!}.
\end{equation} 
For further information, see \cite{ELW02}. These Legendre--Stirling numbers are
generalizations of the classical Stirling numbers $S_{m}^{(j)}$ which are connected to the Laguerre expression which we discuss in our next example. \hfill $\ddagger$
\end{example}
%%%%%%%

%%%%%%%
\begin{example} \lb{e3.5} $(${\rm The Laguerre, Hermite, and Jacobi operators}$)$. \\
In our last example, as an application of Theorem \ref{t2.6}, we briefly discuss the deficiency indices of integral powers of the three classical second-order expressions of Laguerre, Hermite, and Jacobi. In each case, when $m \in \bbN$, the $m^{th}$ integral power of each of these expressions is explicitly known in Lagrangian symmetric form. We indicate what the deficiency indices are for the corresponding minimal operators generated by each these powers. \\[1mm] 
$(i)$ The Laguerre expression in $L^2\big((0,\infty);x^\alpha e^{-x}dx\big)$$:$ \\[1mm]
For $\alpha \in (-1,\infty)$, the Laguerre differential expression is given by
\begin{equation}
\tau_{\alpha,Lag} = -x^{-\alpha}e^{x} \f{d}{dx}x^{\alpha +1}e^{-x} \f{d}{dx}, \quad x \in (0,\infty).     
\end{equation}
For this expression, $\tau_{\alpha,Lag}$ is in the limit circle case at the singular endpoint $x=0$ in $L^2\big((0,\infty);x^{\alpha}e^{-x}dx\big)$ when $\alpha \in (-1,1)$ and in the limit point case when $\alpha \in (1,\infty)$. At $x=\infty$, $\tau_{\alpha,Lag}$ is in the limit point case for all $\alpha \in (-1,\infty)$. Consequently, the minimal operator $T_{\alpha,Lag, min}$ has deficiency indices given by 
\begin{equation}
n_{\pm}(T_{\alpha,Lag, min}) = \begin{cases}
1, & \alpha \in (-1,1), \\
0, & \alpha \in [1,\infty).
\end{cases}
\end{equation}
Details can be found in \cite{GLN20}, \cite[Example 13.7.4.]{GNZ24}. By Theorem \ref{t2.6}, the deficiency indices of $(T_{\alpha,Lag, min})^m$ are give by 
\begin{equation}
n_{\pm} \big((T_{\alpha,Lag,min})^m\big) = \begin{cases}
m, & \alpha \in (-1,1), \\
0, & \alpha \in [1,\infty),
\end{cases} \quad m \in \bbN.
\end{equation}
The form of the minimal operator $(T_{\alpha,Lag, min})^m$ is given by
\begin{equation}
(T_{\alpha,Lag,\min })^{m}f=x^{-\alpha}e^{x}\tau _{Lag}^{m}f=x^{-\alpha}e^{x}\sum_{j=1}^{m}(-1)^{j}S_{m}^{(j)}%
\frac{d^{j}}{dx^{j}}x^{\alpha +1}e^{-x}\frac{d^{j}}{dx^{j}}f,
\end{equation}
for $f \in \dom\big((T_{\alpha,Lag,min})^m\big)$, where $S_m^{(j)}$ is the classical Stirling number of the second kind defined by
\begin{equation}
S_{m}^{(j)} = \sum_{i=0}^{j}\frac{(-1)^{i+j}}{j!}\binom{j}{i}i^{m}.
\end{equation}
 For more details, see \cite[Section 12]{LW02}.\\[1mm] 
$(ii)$ The Hermite expression in $L^2\big((-\infty,\infty );e^{-x^2}dx\big)$$:$ \\[1mm] 
The Hermite differential expression is given by
\begin{equation}
\tau_{Her} = -e^{x^2} \f{d}{dx}e^{-x^2} \f{d}{dx}, \quad x \in (-\infty,\infty).  
\end{equation}
For this expression, $\tau_{Her}$ is in the limit point case at both singular endpoints $x=\pm \infty$ in $L^2\big((-\infty,\infty);e^{-x^2}dx\big)$ . Hence the minimal operator $T_{Her, min}$ has deficiency indices $(0,0)$ and thus coincides with the corresponding maximal operator $T_{Her, max}$; in particular, the operator $T_{Her, min} = T_{Her, max}$ is self-adjoint in $L^2\big((-\infty,\infty );e^{-x^2}dx\big)$. Consequently, by Theorem \ref{t2.6}, the deficiency indices of $(T_{Her, min})^m$ are also given by 
\begin{equation}
n_{\pm}\big((T_{Her, min})^m\big) = 0, \quad m \in \bbN.
\end{equation}
In this case, the form of the minimal operator $(T_{Her, min})^m$ is given by
\begin{equation}
(T_{Her,min })^{m}f=e^{x^2}\tau_{Her}^{m}f=e^{x^2}\sum_{j=1}^{m}(-1)^{j}S_{m}^{(j)}2^{n-j}%
\frac{d^{j}}{dx^{j}}e^{-x^2}\frac{d^{j}}{dx^{j}}f,
\end{equation}
for $f \in \rm{dom}$$(T_{Her,min})^m$,  where $S_m^{(j)}$ is the classical Stirling number of the second kind as defined above. For full details, see  \cite{ELW00}. \\[1mm]
$(iii)$ The Jacobi expression in $L^2\big((-1,1);(1-x)^{\alpha}(1+x)^{\beta}dx\big)$$:$ \\[1mm] 
For $\alpha, \beta \in (-1,\infty)$, the Jacobi differential expression is defined by
\begin{equation}
\tau_{\alpha,\beta,Jac} = -(1-x)^{-\alpha}(1+x)^{-\beta} \f{d}{dx}(1-x)^{\alpha+1}(1+x)^{\beta+1} \f{d}{dx}, \quad x \in (-1,1).  
\end{equation}
Of course, the Legendre expression is a special case of the Jacobi expression when $\alpha = \beta =0$. We refer the reader to \cite{EKLWY07}, \cite{GLPS24}, and \cite[Example~13.5.4]{GNZ24} for full details of the analytic study of $\tau_{\alpha,\beta,Jac}$. The singular endpoints $x=\pm 1$ satisfy the following limit point/limit circle criteria:
\begin{enumerate}
\item[$(a)$] the endpoint $x=1$ is limit point if $\alpha \in [1,\infty)$; if
$\alpha \in (-1,0)$, $x=1$ is regular, and if $\alpha \in [0,1)$, $x=1$ is limit circle,
nonoscillatory.

\item[$(b)$] the endpoint $x=-1$ is limit point if $\beta \in [1,\infty)$; if
$\beta \in (-1,0)$, $x=-1$ is regular, and if $\beta \in [0,1)$, $x=-1$ is limit circle, nonoscillary.
\end{enumerate}

From this, we see that the deficiency indices of the minimal operator $T_{\alpha,\beta,Jac, min}$
are given by 
\begin{equation}
n_{\pm} (T_{\alpha,\beta,Jac,min})  = \begin{cases} 
0, & \alpha,\beta \in [1,\infty),  \\
1, & \text{$\alpha \in [1,\infty)$ and $\beta\in(-1,1)$} \\
& \text{or $\beta \in [1,\infty)$ and $\alpha\in(-1,1)$,}  \\
2, & \alpha,\beta\in(-1,1).
\end{cases}
\end{equation}
Consequently, by Theorem \ref{t2.6}, the deficiency indices of $(T_{\alpha,\beta,Jac,min})^m$ are given by 
\begin{equation}
n_{\pm} \big((T_{\alpha,\beta,Jac,min})^m\big)  = \begin{cases} 
0, & \alpha,\beta \in [1,\infty),  \\
m, & \text{$\alpha \in [1,\infty)$ and $\beta\in(-1,1)$} \\
& \text{or $\beta \in [1,\infty)$ and $\alpha\in(-1,1)$,}  \\
2m, & \alpha,\beta\in(-1,1),
\end{cases} \quad m \in \bbN. 
\end{equation}

Moreover, the form of the minimal operator $(T_{\alpha,\beta,Jac, min})^m = (1-x)^{-\alpha}(1+x)^{-\beta}\tau_{Jac}^m$ is explicitly given by 
\begin{align} 
& (T_{\alpha,\beta,Jac,min})^{m}f  \\
& \quad =(1-x)^{-\alpha }(1+x)^{-\beta }\sum_{j=1}^{m}(-1)^{j}P^{(\alpha,\beta)}S_{m}^{(j)}\frac{%
d^{j}}{dx^{j}}(1-x)^{\alpha +j}(1+x)^{\beta +j}\frac{d^{j}}{dx^{j}}f,   \no 
\end{align} 
for $f \in \rm {dom}$$(T_{\alpha,\beta,Jac,min})^m$,  where $P^{(\alpha,\beta)}S_m^{(j)}$ is the Jacobi-Stirling number defined by
\begin{equation}
P^{(\alpha,\beta)}S_{n}^{(j)} = \sum_{r=1}^{j}(-1)^{r+j}\frac{\Gamma
(\alpha+\beta+r+2)(\alpha+\beta+2r+1)(r^{2}+(\alpha+\beta)r+r)^{n-1} 
}{(r-1)!(j-r)!\Gamma(\alpha+\beta+j+r+2)}.
\end{equation}
\hfill $\ddagger$
\end{example}
%%%%%%%

Next, we turn to some illustrations of Theorems \ref{t2.8} and \ref{t2.9} in the PDE context.

%%%%%%%
\subsection{Homogeneous perturbations of the Laplacian on $\bbR^n$.}  \lb{s3.3}
%%%%%%%
In this subsection we transition to some PDE applications in connection with (minus) the Laplacian $- \Delta_n$ on $\bbR^n$ and some of its (homogeneous) perturbations of the form $c |x|^{-2}$, $x \in \bbR^n\backslash \{0\}$, $n \in \bbN$, $n \geq 2$, and some $c \in \bbR$. This requires some preparations to which we turn next.

Consider the Bessel operator on $(0,\infty)$ generated by the differential expression 
 \begin{equation}
 \tau_{\gamma} = - \f{d^2}{dx^2} + \f{\gamma^2 - (1/4)}{x^2}, \quad \gamma \in [0,\infty), \; x \in (0,\infty).
 \end{equation}
The associated maximal and minimal operators $T_{\gamma,max}$ and $T_{\gamma,min}$ in $L^2((0,\infty); dx)$ associated with $\tau_{\gamma}$ are then given by (see, e,g., \cite[Ch.~13]{GNZ24})
\begin{align}
& T_{\gamma,max} f = \tau_{\gamma} f, \quad \gamma \in [0,\infty),   \no \\
& f \in \dom(T_{\gamma,max}) = \big\{g \in L^2((0,\infty); dx) \, \big| \, g, g' \in AC_{loc}((0,\infty)) \\  
& \hspace*{6.3cm} \tau_{\gamma} g \in L^2((0,\infty); dx) \big\}    \no \\
\begin{split} 
& T_{\gamma,min} f = \tau_{\gamma} f, \quad \gamma \in [0,1),      \\ 
& f \in \dom(T_{\gamma,min}) = \big\{g \in \dom(T_{\gamma,max}) \, \big| \, 
\wti g(0) = \wti g^{\, \prime}(0) =0 \big\},      
\end{split}   \\
&  T_{\gamma,min} = T_{\gamma,max}, \quad \gamma \in [1,\infty),  
\end{align}
where the generalized boundary values $\wti g(0)$ and $\wti g^{\, \prime}(0)$ are of the form  
\begin{align}
\wti g(0) &= \begin{cases} \lim_{x \downarrow 0} g(x) \big/ \big[x^{1/2} \ln(1/x)\big], & \gamma = 0, \\
\lim_{x \downarrow 0} g(x) \big/ \big[(2\gamma)^{-1} x^{(1/2)-\gamma}\big], & \gamma \in (0,1),
\end{cases}    \\
\wti g^{\,\prime}(0) &= \begin{cases} \lim_{x \downarrow 0} \big[g(x) - \wti g(0) x^{1/2} \ln(1/x)\big] \big/ x^{1/2}, 
& \gamma = 0, \\
\lim_{x \downarrow 0} \big[g(x) - \wti g(0) (2\gamma)^{-1} x^{(1/2)-\gamma}\big] \big/ x^{(1/2)+\gamma}, 
& \gamma \in (0,1).  
\end{cases}
\end{align}

Then
\begin{align}
& T_{\gamma,max}^* = T_{\gamma,min}, \quad T_{\gamma,min}^* = T_{\gamma,max}, 
\quad \gamma \in [0,1), \\
& T_{\gamma,max} = T_{\gamma,max}^* = T_{\gamma,min}^* = T_{\gamma,min}, \quad \gamma \in [1,\infty). 
\end{align}
In particular, $\tau_{\gamma}$ is in the limit circle case at $x=0$ for $\gamma \in [0,1)$ and in the limit point case at $x=0$ for $\gamma \in [1,\infty)$; in addition, $\tau_{\gamma}$ is in the limit point case at $x=\infty$ for $\gamma \in [0,\infty)$. Consequently, taking into account that $\big[\gamma^2 - (1/4)\big] x^{-2}$, $x \in (0,\infty)$, is real-valued, one obtains (cf.\ \eqref{2.61})
\begin{equation}
n_{\pm} (T_{\gamma,min}) = \begin{cases} 1, & \gamma \in [0,1), \\
0, & \gamma \in [1,\infty), 
\end{cases} \quad  
n_{\pm} \big((T_{\gamma,min})^m\big) = \begin{cases} m, & \gamma \in [0,1), \\
0, & \gamma \in [1,\infty),
\end{cases} \quad m \in \bbN. 
\end{equation}

In addition we introduce the following family of Bessel differential expressions 
\begin{align}
\begin{split} 
& \tau_{n,\ell,L,\alpha} = - \f{d^2}{dr^2} + \f{\ell(\ell+n-2) - L(L+n-2)}{r^2},     \\
& n \in \bbN, \, n \geq 2, \; \ell, L \in \bbN_0, \; \alpha \in [-1/4,3/4), \; r \in (0,\infty),    
\end{split} 
\end{align}
such that once again 
\begin{align} 
& n_{\pm} (T_{n,\ell,L,\alpha,min}) = \begin{cases} 1, & 0 \leq \ell \leq L,  \\
0, & \ell \in [L+1,\infty),
\end{cases}    \\ 
& n_{\pm} \big((T_{n,\ell,L,\alpha,min})^m\big) = \begin{cases} m, & 0 \leq \ell \leq L,  \\
0, & \ell \in [L+1,\infty),
\end{cases} \quad m \in \bbN,    \lb{3.90} \\
& n \in \bbN, \, n \geq 2, \; \ell, L \in \bbN_0, \; \alpha \in [-1/4,3/4).    \no
\end{align}
Here, in obvious notation, $T_{n,\ell,L,\alpha,min}$ denotes the minimal operator associated with $\tau_{n,\ell,L,\alpha}$ in $L^2((0,\infty); dr)$, for the parameter ranges 
\begin{equation} 
n \in \bbN, \, n \geq 2, \; \ell, L \in \bbN_0, \; \alpha \in [-1/4,3/4), 
\end{equation} 
which we keep throughout the rest of Subsection \ref{s3.3}. 

Next, we pivot to the PDE context and consider the pre-minimal operator
\begin{equation}
\bigg(- \Delta_n - \f{[(n-1)(n-3)/4] + L(L+n-2)}{|x|^2}\bigg)\bigg|_{C_0^{\infty}(\bbR^n \backslash \{0\})} \lb{3.92} 
\end{equation}
in $L^2(\bbR^n;d^nx)$. Here $-\Delta_n$ represents (minus) the Laplacian differential expression in $\bbR^n$, which, in spherical coordinates $(r,\omega)$, $r \in (0,\infty)$, $\omega \in \bbS^{n-1}$,  amounts to 
\begin{equation}
- \Delta_n = \bigg(- \f{\partial^2}{\partial r^2} - \f{n-1}{r} \f{\partial}{\partial r} - \f{1}{r^2} \Delta_B\bigg), 
\end{equation} 
where $\Delta_B$ abbreviates the Laplace-Beltrami differential expression on the $(n-1)$-dimensional unit sphere $\bbS^{n-1}$ in $\bbR^n$ (cf., e.g., \cite[Sect.~15.5]{GNZ24} for details). 

As a final ingredient we introduce the unitary map
\begin{equation}
U_n \colon \begin{cases} L^2((0,\infty); dr) \to L^2\big((0,\infty); r^{n-1} dr\big),    \\
f \mapsto M_{(\dott)^{-(n-1)/2}} f,
\end{cases}
\end{equation}
where $M_{\psi}$ denotes the maximally defined operator of multiplication in the space $L^2((0,\infty);dr)$ by the (Lebesgue) measurable function $\psi$, that is, 
\begin{equation}
(M_{\psi} f)(r) = \psi(r) f(r) \, \text{ for a.e.~$r \in (0,\infty)$}, \; f \in L^2((0,\infty);dr).  
\end{equation}
One then infers that (see, e.g., \cite[p.~160--161]{RS75})
\begin{align}
T_{n,L,\alpha,min} &= \ol{\bigg(- \Delta_n - \f{[(n-1)(n-3)/4] + L(L+n-2)}{|x|^2}\bigg)\bigg|_{C_0^{\infty}(\bbR^n \backslash \{0\})}}  \\
& = \bigoplus_{\ell \in \bbN_0} U_n T_{n,\ell,L,\alpha,min} U_n^{-1}, 
\end{align}
where, again in obvious notation, $T_{n,L,\alpha,min}$ is the minimal operator associated with the partial differential expression employed in \eqref{3.92}. 

Exploiting \eqref{3.90} (and noting $\ell \in \bbN_0 = \{0\} \cup \bbN$) one infers that 
\begin{equation}
n_{\pm} (T_{n,L,\alpha,min}) = n_{\pm} \Bigg(\bigoplus_{\ell \in \bbN_0} T_{n,\ell,L,\alpha,min} \Bigg) = L+1,
\end{equation}
implying 
\begin{equation}
n_{\pm} \big((T_{n,L,\alpha,min})^m\big) = m(L+1), \quad m \in \bbN, 
\end{equation}
again by an application of Theorems \ref{t2.8} and \ref{t2.9} (resp., \eqref{2.61}).

%%%%%%%
\subsection{On singularly perturbed Dirichlet Laplacians}  \lb{s3.4} 
%%%%%%%
In this subsection we discuss an applicaton of the abstract results in \cite{Fi19} to singularly perturbed Dirichlet Laplacians. 
	
Let $\Omega\subseteq \bbR^n$ be an open bounded domain with smooth boundary $\partial\Omega$. We  will employ the usual $(n-1)$-dimensional surface measure $d \omega^{n-1}$ on $\partial\Omega$ and introduce the Dirichlet and Neumann traces on the boundary $\partial \Omega$ following Grubb \cite{Gr83} (see also the summary in \cite[Example~3.5]{AGMST10}) as follows: Consider
\begin{equation}
\dot \gamma_k \colon \begin{cases} C^{\infty}(\Omega) \to C^{\infty}(\partial\Omega), \\
u \mapsto \big(\partial_n^k\big)\big|_{\partial\Omega}, 
\end{cases} \quad k=0,1, 
\end{equation}
with $\partial_n$ denoting the interior normal derivative. By continuity, $\dot \gamma_k$ extends to a bounded operator,
\begin{equation}
\gamma_k \colon H^s(\Omega) \to H^{s-k-(1/2)}(\Omega), \quad s \in (k+(1/2),\infty),
\end{equation}
such that the map
\begin{equation}
\gamma^{(1)} = (\gamma_0,\gamma_1) \colon H^s(\Omega) \to H^{s-(1/2)}(\Omega) \times 
H^{s - (3/2)}(\Omega), \quad s \in (3/2,\infty), 
\end{equation}
satisfies 
\begin{equation}
\ker\big(\gamma^{(1)}\big) = H^s_0(\Omega), \quad \ran\big(\gamma^{(1)}\big) = 
H^{s-(1/2)}(\Omega) \times 
H^{s - (3/2)}(\Omega), \quad s \in (3/2,\infty). 
\end{equation}
In the following we employ the more suggestive notation
\begin{equation}
\gamma_D = \gamma_0, \quad \gamma_N = \gamma_1
\end{equation}
for the Dirichlet and Neumann trace operators.

Introducing the minimal Laplace operator in $L^2(\Omega)$ by 
\begin{align}
& T_{\Omega,min} f = -\Delta f,   \\
& f \in \dom(T_{\Omega,min}) = \big\{g \in H^2(\Omega) \, \big| \, \gamma_Dg = \gamma_Ng = 0\big\},
\end{align}
the corresponding maximal operator $T_{\Omega,max}$ is given by
\begin{align}
& T_{\Omega,max} f = -\Delta f,   \\
& f \in \dom(T_{\Omega,max}) = \big\{g \in L^2(\Omega) \, \big| \, \Delta g \in L^2(\Omega)\big\},
\end{align}
and one infers
\begin{equation}
T_{\Omega,min}^*=T_{\Omega,max}, \quad T_{\Omega,max}^*=T_{\Omega,min}. 
\end{equation}
Since the open set $\Omega \subset \bbR^n$ was assumed to be bounded, $T_{\Omega,min}$ is a strictly positive symmetric operator in $L^2(\Omega)$, whose Friedrichs extension is given by the self-adjoint Dirichlet realization $T_{\Omega,D}$ of the Laplacian, 
\begin{align}
& T_{\Omega,D} f = -\Delta f,   \\
& f \in \dom(T_{\Omega,D}) = \big\{g \in H^2(\Omega) \, \big| \, \gamma_Dg = 0\big\},
\end{align}
In \cite[Example~6.1]{Fi19}, the following restrictions of $T_{\Omega,D}$ are constructed: Fix 
\begin{equation}
h\in L^2(\Omega), \quad k \in C(\partial \Omega) \backslash \{0\},     \lb{3.111} 
\end{equation} 
and define
\begin{align}
\begin{split} 
& T_{\Omega,h,k} f = -\Delta f,     \lb{3.112} \\
& f \in \dom(T_{\Omega,h,k}) = \{g \in \dom(T_{\Omega,D})) \, | \, (h,g)_{L^2(\Omega)} = (k,\gamma_Ng)_{L^2(\partial\Omega)}\}.
\end{split} 
\end{align}
Then $T_{\Omega,h,k}$ is closed in $L^2(\Omega)$ by \cite[Theorem~2.11]{Fi19}. 
For convenience, and without loss of generality, we employ the normalization 
\begin{equation} 
\|k\|_{L^2(\partial\Omega)}=1 
\end{equation} 
from now on (after all, one can absorb any normalizing factor for $k$ in the function $h$). Moreover, we note that 
\begin{equation} 
T_{\Omega,min} \subseteq T_{\Omega,h,k} \, \text{ if and only if $h = 0$.} 
\end{equation} 
In addition, if one temporarily only assumes that $k \in C(\partial\Omega)$, it was shown in \cite[Example~6.1]{Fi19} that 
\begin{equation} 
\text{$T_{\Omega,h,k}$ in \eqref{3.112} is densely defined if and only if $k \neq 0$,}
\end{equation} 
explaining our choice of hypothesis on $k$ in \eqref{3.111}. 

One observes that $T_{\Omega,h,k}$ as a restriction of $T_{\Omega,D}$ is also bounded from below with a strictly positive lower bound.  
	
The adjoint $T_{\Omega,h,k}^*$ in $L^2(\Omega)$ is then given by
\begin{align}
\begin{split} 
& T_{\Omega,h,k}^* f = -\Delta f + (k, \gamma_Df)_{L^2(\partial\Omega)} h,   \\
& f \in \dom(T_{\Omega,h,k}^*) = \big\{g \in H^2(\Omega) \, \big| \, \gamma_Dg = (k,\gamma_Dg)_{L^2(\partial\Omega)} k\big\}. 
\end{split} 
\end{align}

Next, let $\eta_k$ be the unique solution of the Cauchy problem
\begin{equation}
\Delta\eta_k = 0, \quad \gamma_D \eta_k = k, \, \text{ implying } \, \eta_k \in \ker(T_{max}). 
\end{equation}
From results in \cite[Example~6.1]{Fi19}, it follows that 
\begin{equation} 
\dom(T_{\Omega,h,k}^*) = \dom(T_{\Omega,D}) \dotplus {\rm lin.span}\{\eta_k\}, 
\end{equation} 
and hence, 
\begin{equation} 
\dom(T_{\Omega,h,k}^*)\subseteq\dom(T_{\Omega,max}), 
\end{equation} 
and 
\begin{equation}
T_{\Omega,h,k}^*\subseteq T_{\Omega,max} \, \text{ if and only if $h = 0$.} 
\end{equation}
Finally, we note that the deficiency indices and subspaces are given by 
\begin{equation}
n_\pm(T_{\Omega,h,k})=1, \quad \ker(T_{\Omega,h,k}^*\mp i I) 
= {\rm lin.span}\{(T_{\Omega,D} \mp i I)^{-1}(h \mp i\eta_k)-\eta_k\}.    \lb{3.121} 
\end{equation}

Using von Neumann's theory of self-adjoint extensions of a closed, symmetric operator, the following description of all self-adjoint extensions of $T_{\Omega,h,k}$ (by \eqref{3.121} necessarily a real one-parameter family of self-adjoint extensions) was obtained in \cite[Example~6.1]{Fi19}, 
\begin{align}
& T_{\Omega,h,k,\alpha} f =T_{\Omega,h,k}^* f, \quad \alpha \in \bbR,    \no \\
& f \in \dom(T_{\Omega,h,k,\alpha})= \{g \in \dom(T_{\Omega,h,k}^*) \, | \, (k,\gamma_N g)_{L^2(\partial\Omega)}
- (h, f)_{L^2(\Omega)}    \\
& \hspace*{7.5cm} = \alpha (k,\gamma_D g)_{L^2(\partial\Omega} \},    \no \\
& T_{\Omega,h,k,\infty} = T_{\Omega,D}, \quad \alpha = \infty. 
 \end{align}
	
Next, we determine the Friedrichs and Krein--von Neumann extensions of $T_{\Omega,h,k}$ and also all nonnegative self-adjoint extensions of $T_{\Omega,h,k}$. 

We start with the the Krein--von Neumann extension $T_{\Omega,h,k,K}$ of $T_{\Omega,h,k}$. Since $T_{\Omega,h,k}$ is strictly positive, the domain $\dom(T_{\Omega,h,k,K})$ is given by
\begin{equation}
\dom(T_{\Omega,h,k,K})=\dom(T_{\Omega,h,k}) \dotplus \ker(T_{\Omega,h,k}^*),
\end{equation}
and hence it remains to determine the one-dimensional kernel of $T_{\Omega,h,k}^*$. From a straightforward calculation, it follows that 
\begin{align}
\begin{split}
T_{\Omega,h,k}^* \big((T_{\Omega,D})^{-1} h-\eta_k\big) &= - \Delta\big((T_{\Omega,D})^{-1} h - \eta_k\big) 
+ \big(h, \gamma_D \big((T_{\Omega,D})^{-1} h - k\big)\big)_{L^2(\partial\Omega} h    \\
& = \big[1 - \|k\|_{L^2(\partial\Omega)}^2 \big] h = 0.
\end{split} 
\end{align}
Thus, the operator $T_{\Omega,h,k,K}$ is given by
\begin{align}
\begin{split} 
& T_{\Omega,h,k,K} f = T_{\Omega,h,k}^* f,    \\
& f \in \dom(T_{\Omega,h,k,K})=\dom(T_{\Omega,h,k}) \dotplus {\rm lin.span}\big\{(T_{\Omega,D})^{-1} h - \eta_k\big\}. 
\end{split} 
\end{align}

Next, we turn to the Friedrichs extension $T_{\Omega,h,k,F}$ of $T_{\Omega,h,k}$. In this context we will employ the following general result in the theory of nonnegative self-adjoint extensions of a given closed, strictly positive, symmetric operator $S$ in the complex, separable Hilbert space $\cH$ to the following effect:  
\begin{align} 
\begin{split} 
& \text{If $\wti S$ is a nonnegative self-adjoint extension of $S$, then}      \lb{3.124} \\
& \quad \text{$\wti S = S_F$ if and only if $\dom\Big(\big(\wti S\big)^{1/2}\Big) \cap \ker(S^*) = \{0\}$.} 
\end{split} 
\end{align}
An application of \eqref{3.124} will show that $T_{\Omega,h,k,F}=T_{\Omega,D}$ as follows: 
First, one observes that by construction, $T_{\Omega,D}$ is a strictly positive self-adjoint extension of $T_{\Omega,h,k}$, with form domain given by $\dom\big((T_{\Omega,D})^{1/2}\big)=H_0^1(\Omega)$. Next, since $\ker(T_{\Omega,h,k}^*) = {\rm lin.span} \big\{(T_{\Omega,D})^{-1} h-\eta_k\big\}$ and 
$\gamma_D \big((T_{\Omega,D})^{-1} h - \eta_k\big) = - k \neq 0$, one concludes that $(T_{\Omega,D})^{-1} h-\eta_k \notin H^1_0(\Omega)$. Consequently,  
\begin{equation}
\dom\big((T_{\Omega,D})^{1/2}\big) \cap \ker(T_{\Omega,h,k}^*) = H^1_0(\Omega) \cap 
{\rm lin.span}\big\{(T_{\Omega,D})^{-1} h - \eta_k\big\} = \{0\},
\end{equation}
implying $T_{\Omega,D} = T_{\Omega,h,k,F}$, independently of $h \in L^2(\Omega)$ and $g \in C(\partial\Omega)$. 

We conclude these observations by next determining all nonnegative self-adjoint extensions of $T_{\Omega,h,k}$. The largest is, of course, the Friedrichs extension $T_{\Omega,h,k,F}=T_{\Omega,D}$. By the Birman--Krein--Vishik-theory (see, e.g., \cite{AS80} and the references cited therein), all others are parametrized by a nonnegative parameter $\beta \in [0,\infty)$ and explicitly given by
\begin{align}
\begin{split} 
& T_{\Omega,h,k,\beta} f = T_{\Omega,h,k} f, \quad \beta \in [0,\infty),  \\ 
& f \in \dom(T_{\Omega,h,k,\beta}) = \dom(T_{\Omega,h,k})     \\
& \hspace*{6mm} 
\dotplus {\rm lin.span}\left\{\beta (T_{\Omega,D})^{-2}h + (T_{\Omega,D})^{-1} h - \beta (T_{\Omega,D})^{-1} \eta_k-\eta_k\right\}.  
\end{split} 
\end{align}

Given all these preparations, we can finally return to the principal aim of this subsection, namely, the application of Theorems \ref{t2.8} and \ref{t2.9} to integer powers of the operator $T_{\Omega,h,k}$. Since $n_\pm(T_{\Omega,h,k})=1$, one immediately obtains 
\begin{equation}
n_{\pm} \big((T_{\Omega,h,k})^m\big) = m, \quad m \in \bbN. 
\end{equation} 
These operators are given by
\begin{align}
& (T_{\Omega,h,k})^m f = (-\Delta)^m f, \quad m \in \bbN,     \no \\
& f \in \dom\big((T_{\Omega,h,k})^m\big) = \big\{g \in H^{2m}(\Omega) \, \big| \, \gamma_D \big((\Delta)^\ell g\big) = 0, \\ 
& \quad \quad  \big(h,(\Delta)^\ell g\big)_{L^2(\Omega)} = \big(k,\gamma_N\big(((\Delta)^\ell g \big)\big)_{L^2(\partial\Omega)}, \, \ell=0,\dots,m-1\big\}.   \no
\end{align}
	
Since $T_{\Omega,h,k}$ is strictly positive, one concludes that 
\begin{equation} 
\dim(\ker(T_{\Omega,h,k}^*))=n_{\pm}(T_{\Omega,h,k})=1, 
\end{equation} 
and hence by Theorem \ref{t2.6}, this also implies 
\begin{equation} 
\dim\big(\ker\big(((T_{\Omega,h,k})^m)^*\big)\big) = n_{\pm} \big((T_{\Omega,h,k})^m\big)=m, \quad m\in\bbN. 
\end{equation} 
Indeed, $\ker\big(((T_{\Omega,h,k})^m)^*\big)$ is given by
\begin{align} 
\ker\big(((T_{\Omega,h,k})^m)^*\big) = {\rm lin.span} \big\{(T_{\Omega,D})^{-\ell} h - (T_{\Omega,D})^{-(\ell-1)} \eta_k,\, \ell=1,\dots,m\big\}.
\end{align}

\medskip 

%%%%%%%%%%%%%%%%%%%%%%%%%%%%%%%%%%%%%
\noindent
{\bf Acknowledgments.} We are indebted to Andrei Martinez-Finkelshtein for very helpful discussions, in particular, he supplied us with the elementary proof of Lemma \ref{l2.11}. 
%%%%%%%%%%%%%%%%%%%%%%%%%%%%%%%%%%%%%

\medskip

%%%%%%%%%%%%%%%%%%%%%%%%%%%%%%%%
%%%%%%%%%%%%%%%%%%%%%%%%%%%%%%%%

\end{document}